\newcolumntype{H}{>{\setbox0=\hbox\bgroup}c<{\egroup}@{}}
\def \R { \mathbb{R} }
\newcommand{\dd}{\mathrm{d}}
\newcommand{\bbeta}{\boldsymbol{\beta}}
\newcommand{\bz}{\mathbf{z}}
\renewcommand{\P}{\mathrm P}
\newcommand{\gm}{\gamma}
\newcommand{\ta}{\theta}
\newcommand{\bDelta}{\mathbf \Delta}
\newcommand{\bSigma}{\mathbf \Sigma}
\newcommand{\bZ}{\mathbf{Z}}
\newcommand{\bC}{\mathbf{C}}
\newcommand{\bA}{\mathbf{A}}
\newcommand{\bX}{\mathbf{X}}
\newcommand{\bx}{\mathbf{x}}
\newcommand{\bS}{\mathbf{S}}
\newcommand{\bic}{\mathrm{BIC}}
\newcommand{\cbic}{\mathrm{CBIC}}
\renewcommand{\det}{\mathrm{det}}
\newcommand{\EE}{{\mathbb E}}
\newcommand{\Var}{{\mathbb{V}\mathrm{ar}}}
\DeclareMathOperator*{\argmax}{arg\,max}
\definecolor{darkturquoise}{rgb}{0.0, 0.81, 0.82}
\title{Information criteria for inhomogeneous spatial point processes}
\author[1]{Achmad Choiruddin}
\affil[1]{Department of Statistics, Institut Teknologi Sepuluh Nopember (ITS), Indonesia}
\author[2]{Jean-Fran{\c c}ois Coeurjolly}
\affil[2]{Department of Mathematics, Universit\'e du Qu\'ebec \`a Montr\'eal (UQAM), Canada}
\author[3]{Rasmus Waagepetersen}
\affil[3]{Department of Mathematical Sciences, Aalborg University, Denmark}
\newtheorem{theorem}{Theorem}
\newtheorem{proposition}{Proposition}
\newtheorem{remark}{Remark}
\newcommand{\NN}{\mathbb N}
\newtheorem{example}{Example}
\begin{document}
	
	\maketitle
	
	\begin{abstract}
          The theoretical foundation for a number of model selection criteria is established in the context of inhomogeneous point processes and under various asymptotic settings: infill, increasing domain, and combinations of these. For inhomogeneous Poisson processes we consider Akaike's information criterion and the Bayesian information criterion, and in particular we identify the point process analogue of `sample size' needed for the Bayesian information criterion. Considering general inhomogeneous point processes we derive new composite likelihood and composite Bayesian information criteria for selecting a regression model for the intensity function. The proposed model selection criteria are evaluated using simulations of Poisson processes and cluster point processes.
		\end{abstract}
	\noindent {\bf Keywords:} Akaike's information criterion, Bayesian information criterion, composite information criterion, composite likelihood, inhomogeneous point process, intensity function, model selection.

	\section{Introduction}

        Fitting a regression model to the intensity function of a point process is one of the most fundamental tasks in statistical analysis of point pattern data , see e.g. \citet{moeller:waagepetersen:17} or \citet{coeurjolly:lavancier:19} for a recent review of this problem. If the data in question can be viewed as a realization of a Poisson process, regression parameters are usually estimated by maximum likelihood. If the point process is not Poisson, the likelihood function is often computationally intractable. In such cases the Poisson likelihood function can still be used as a composite likelihood function for estimating regression parameters. This is e.g.\ the approach underlying the popular  {\texttt{spatstat R}} package \citep{baddeley:rubak:turner:15} procedure \texttt{kppm}.
        
 Considering regression models, model selection is often a pertinent task. In case of a Poisson process, the Akaike information criterion (AIC) \citep{akaike:73}  seems an obvious approach (and implemented in the function \texttt{logLik.ppm} of \texttt{spatstat}) since in this case the likelihood function is available. If the assumption of a Poisson process is not tenable, generalization of the AIC to a composite likelihood information criterion (CIC) \citep{varin:vidoni:05} is relevant. Yet another alternative is the Bayesian information criterion (BIC) \citep{schwarz:78}. The classical framework for the information criteria mentioned typically involves a sample of independent observations where the sample size plays a crucial role for the BIC. However, in the case of a unique realization of a point process, it is not obvious how to define sample size. In some sense, it is one, but this choice is obviously not useful for asymptotic justifications of information criteria. Instead, sample size must be linked to properties of the observation window or the point process intensity function. 
 Several proposals for defining `sample size' have been considered in the literature. \citet{choiruddin:coeurjolly:letue:18} use the size of the observation window while \citet{thurman:fu:guan:15} consider the number of observed points. \citet{jeffrey:etal:18} use the sum of the number of data points and the number of dummy points used in a numerical approximation of the likelihood \citep{berman:turner:92}.

In this paper we first establish the theoretical foundation for AIC and CIC in the context of intensity function model selection for a point process. This includes asymptotic results for estimates of the `least false parameter value', see e.g. \citet[][Section 2.2]{claeskens:hjort:08}.
Next we derive the BIC in case of  a Poisson process and we thus identify what is the meaning of sample size in this context. We also consider the generalization of BIC to composite likelihood BIC (CBIC) \citep{gao:song:10} using a concept of effective degrees of freedom derived for the CIC. Our asymptotic developments are established under an original setting which embraces both infill asymptotic (the number of points in a fixed domain increases) and increasing domain asymptotics (the volume of the observation window tends to infinity) which are often considered in the literature. 

The rest of the article is organized as follows. The problem of selecting a model for the intensity function is specified  in Section~\ref{sec:modelselection}. In Section~\ref{sec:estInt} we discuss asymptotic results for intensity function regression parameter estimators under a 'double' asymptotic framework. We derive the AIC and CIC for spatial point processes in Section~\ref{sec:cic} and develop the BIC and CBIC in Section~\ref{sec:bic}. The different model selection criteria are compared in a simulation study in Section~\ref{sec:sim}. Section~\ref{sec:concl} gives some concluding remarks. Proofs are given in the  Appendices~\ref{sec:mlestar}-\ref{sec:wg09}.

\section{Intensity model selection}\label{sec:modelselection}

A spatial point process $\bX$ defined on $\R^d$ is a locally finite random subset of $\R^d$. If for bounded $B  \subset S$ we denote by $N(B)$ the cardinality of $\bX \cap B$, locally finite means that $N(B)$ is a finite integer almost surely.
For a bounded domain $W \subset S$, $|W|$ denotes the volume of $W$. The intensity function $\lambda$ and the pair correlation function $g$ of $\bX$ are defined (if they exist) by the equations
\begin{align*}
\EE N(A) &= \int_A \lambda(u) \dd u \\
 \EE \left\{ N(A)N(B) \right\}&= \int_A \lambda(u) \dd u + \int_A \int_B \lambda(u) \lambda(v) g(u,v) \dd u \dd v  	
\end{align*}
for any bounded $A,B \subset \R^d$.

If the counts $N(A)$ are Poisson distributed, $\bX$ is said to be a Poisson process. In this case counts $N(B_1),\dots,N(B_m)$ are independent whenever the subsets $B_1,\ldots,B_m$ are disjoint and the pair correlation function is identically equal to one. 
For our asymptotic considerations, we assume that a sequence of spatial point processes $\bX_n$ is observed within a sequence of bounded windows $W_n \subset \R^d$, $n=1,2,\ldots$. We denote by $\lambda_n$ and $g_n$ the intensity and pair correlation function of $\bX_n$. With an abuse of notation, we denote for any $n \ge 1$ expectation and variance under the sampling distribution of $\bX_n$ by $\EE$ and $\Var$.

For modelling the intensity function we assume that $p \ge 1$ covariates $z_1,\ldots,z_p$ are available where for each $i=1,\ldots,p$, $z_i$ is a locally integrable function on $\R^d$. Let $I_l$, $l=1,\ldots,2^p,$ denote the subsets of $\{1,\dots,p\}$ and let $p_l=|I_l|+1$ where $|I_l|$ is the cardinality of $I_l$. We consider models for $\lambda_n$ specified in terms of the $2^p$ subsets of the covariates. For each $l=1,\ldots,2^p$ and $n \ge 1$ we  define the log-linear model
	\begin{equation}\label{eq:loglinear} 
	\rho_{l,n}\{u;\bz_l(u);\bbeta_l\}=\theta_n \exp\{ \bbeta_l^\top \bz_l(u)\} \end{equation}
	where $\bbeta_l=\left \{\beta_0,(\beta_j)_{j\in I_l} \right\} \in
	\R^{p_l}$ and $\bz_l(u) = \big[ 1, \{z_j(u)\}_{j\in I_l} \big]$.  The quantity $\theta_n$ should not be regarded as a parameter to be estimated. For $n \ge 1$, $\theta_n$ could e.g.\ represent a timespan over which $\bX_n$ is observed.  In the following, with an abuse of notation, we just write $\rho_{n}(u;\bbeta_l)$ for $\rho_{l,n}\{u;\bz_l(u);\bbeta_l\}$ and similarly for related quantities.

The problem we consider is to select among the $2^p$ intensity models $\mathcal M_l$, $l=1,\dots,2^p$, given by 
	\[
	\mathcal M_l = \{  \rho_{n}(\cdot ;\bbeta_l ) \mid \bbeta_l = \left\{\beta_0,(\beta_j)_{j\in I_l}\right\} \in \R^{p_l} \}, \quad l=1,\dots,2^p.
	\] 
	The distinction between $\beta_0$ and the other parameters is necessary because our objective is to select among $2^p$ different models which all contain an intercept term. Note that the true intensity function $\lambda_n$ does not necessarily correspond to any of the suggested models ${\cal M}_l$, $l=1,\ldots,2^p$.

 \section{Estimation of the intensity function} \label{sec:estInt}

In this section we discuss estimation of the intensity function
  using a Poisson likelihood function 
  and associated asymptotic results.
\subsection{The Poisson likelihood function}\label{sec:correct}

    The density of a Poisson point process with intensity ${\rho_n}(\cdot;\bbeta_l)$ and observed in $W_n$ is given by (see e.g. \cite{moeller:waagepetersen:04})
	\begin{equation}\label{eq:fn}
	p_{n}(\bx ; \bbeta_l) = \left\{ \prod_{u \in {\bx}} {\rho_{n}}(u;\bbeta_l) \right\} \exp\left\{ |W_n|-\int_{W_n} {\rho_{n}}(u;\bbeta_l) \dd u\right\} 
	\end{equation} 
	for locally finite point configurations {$\bx \subset W_n$}. 
We emphasize that in the following we assume neither that $\bX_n$ introduced in the previous section is a Poisson process nor that ${\rho_{n}}(\cdot;\bbeta_l)$ coincides with the intensity function $\lambda_n$ of $\bX_n$.

Combining \eqref{eq:loglinear} and \eqref{eq:fn}, up to a constant, the $\log$ of \eqref{eq:fn} evaluated at $\bX_n$ becomes
	\begin{equation}\label{eq:loglikelihood}
	\ell_{n}(\bbeta_l) = \sum_{u \in {\bX_n}} \bbeta_l^\top \bz_l(u) - \theta_n \int_{W_n} \exp\{ \bbeta_l^\top \bz_l(u)\} \dd u. 
	\end{equation}
	Let $e_{n}(\bbeta_l)$ be the corresponding estimating function given by
	\begin{equation}\label{eq:score}
	e_{n}(\bbeta_l) = \frac{\dd}{\dd \bbeta_l} \ell_{n}(\bbeta_l)=\sum_{u\in {\bX_n}} \bz_l(u) -\theta_n \int_{W_n} \bz_l(u) {\rho_n}(u;\bbeta_l) \dd u.
	\end{equation}
	For any $\bbeta_l \in {\R^{p_l}}$, the sensitivity (or Fisher information) matrix is
	\begin{equation}\label{eq:sensitivity}
	\bS_n(\bbeta_l)  = -\EE  \left\{\frac{\dd}{\dd \bbeta_l^\top} e_n(\bbeta_l)  \right\}= \theta_n \int_{W_n}  \bz_l(u)\bz_l(u)^\top {\rho_n}(u;\bbeta_l) \dd u. 
	\end{equation}
	We assume $\bS_n(\bbeta_l)$ is positive definite for all  $\bbeta_l$ (see also condition~C\ref{c:eigen} in the next section). We can then define the estimator of $\bbeta_l$ as
	\begin{equation}
			\label{eq:bbetalEst}
			\hat \bbeta_{l,n} = \argmax_{\bbeta_l \in {\R^{p_l}} } \,  {p_{n}}(\bX_n;\bbeta_l) = \argmax_{\bbeta_l \in {\R^{p_l}} } \, \ell_{n}(\bbeta_l).
	\end{equation}
If $\bX_n$ is indeed a Poisson process with intensity function ${\rho_n}(\cdot;\bbeta_l)$, then $\hat \bbeta_l$ is the maximum likelihood estimator and the sensitivity \eqref{eq:sensitivity} equals the observed information matrix $-\dd e_n(\bbeta_l)/\dd \bbeta_l^\top$.

If $\bX_n$ is not Poisson, $\hat \bbeta_{l,n}$ may be viewed as a composite likelihood estimator \citep{schoenberg:05,waagepetersen:07}. In the situation where the intensity function ${\rho_n}(\cdot;\bbeta_l)$ coincides with the true intensity function $\lambda_n$, asymptotic properties of maximum likelihood or composite likelihood estimators obtained as maximizers of Poisson likelihood functions have been established in various settings by \cite{rathbun:cressie:94}, \cite{waagepetersen:07}, \cite{guan:loh:07} and \cite{waagepetersen:guan:09}. In the next section we investigate the more intriguing situation where the intensity model is misspecified.

\subsection{Framework and asymptotic results for misspecified intensity functions} \label{sec:asymp}

To handle the situation where ${\rho_n}(\cdot;\bbeta_l)$ does not coincide with the intensity function of $\bX_n$, we follow \cite{varin:vidoni:05} and define a (composite) Kullback-Leibler divergence between the model ${\cal M}_l$ with parameter $\bbeta_l$ and the true sampling distribution. That is, 
\begin{equation}\label{eq:kl} \mathrm{KL}_n(\bbeta_l)= \EE \left\{\ell_n- \ell_n(\bbeta_l) \right\} \end{equation}
where $\ell_n$ is the Poisson log-likelihood obtained with the true intensity $\lambda_n$.   For a window $W_n$ and model ${\cal M}_l$ we let
\[ \bbeta_{l,n}^* = \argmax_{{\bbeta_l \in \R^{p_l}}}  \EE  \ell_n(\bbeta_l) \]
denote the `least wrong parameter value' under model ${\cal M}_l$, provided the maximum exists. It is easy to see by explicit evaluation of the right hand side that
\[
	-\frac{\dd }{\dd {\bbeta_l}^\top} \left\{ -\frac{\dd}{\dd \bbeta_l } \EE \ell_n(\bbeta_l)\right\} = \mathbf S_n(\bbeta_l)	
\]
and so condition~C\ref{c:eigen} stated below implies that $\bbeta_{l,n}^*$ is well-defined as a unique maximum when $n$ is large enough. Also it is easy to see that
\begin{equation}\label{eq:unbiased} \EE e_n(\bbeta_{l,n}^*) =0, \end{equation}
which means that $\hat \bbeta_{l,n}$ given by~\eqref{eq:bbetalEst} is a candidate to estimate $\bbeta_{l,n}^*$.

The remainder of this section is devoted to asymptotic results for $\hat \bbeta_{l,n}$ within the above framework of a misspecified intensity function. We thereby extend the results in the references mentioned in Section~\ref{sec:correct}. In contrast to these references which used either increasing domain or infill asymptotics,  we moreover consider a `double asymptotic' framework as formalized by condition~C\ref{c:asymptotic} presented below.

Two matrices are crucial for the asymptotic results. The sensitivity matrix $\mathbf S_n(\bbeta_l)$ is given by~\eqref{eq:sensitivity} regardless of whether the model is misspecified or not. The variance-covariance matrix $\boldsymbol{\Sigma}_{l,n}$ of \eqref{eq:score} is, using the Campbell theorem, 
given by
\begin{align}
	\boldsymbol{\Sigma}_{l,n} &= \int_{W_n} \bz_l(u) \bz_l(u)^\top\lambda_n(u) \dd u \nonumber \\&\qquad + \int_{W_n}\int_{W_n} \bz_l(u) \bz_l(u)^\top\lambda_n(u) \lambda_n(v) \left\{g_n(u,v) -1\right\}\dd u \dd v. \label{eq:Sigmanl}
\end{align}
Observe that $\boldsymbol \Sigma_{l,n}$ does not depend on $\bbeta_l$ (whence its notation).
Our results will be based on the following assumptions where for a
square matrix $\mathbf M$, $\nu_{\min}(\mathbf M)$ (resp.\
$\nu_{\max}(\mathbf M)$) stands for the smallest (resp.\ largest)
eigenvalue. We use  $ a_n \asymp b_n$ to denote that $a_n={\mathcal
  O}(b_n)$ and $b_n = {\mathcal O}(a_n)$.

\begin{enumerate}
\renewcommand{\theenumi}{\arabic{enumi}}
\renewcommand{\labelenumi}{[C\theenumi]}
\item \label{c:compact} As $n\to \infty$, $\sup_{n \ge 1} \|\bbeta_{l,n}^*\| =\mathcal O(1)$.

\item \label{c:boundedcov} $\bz_l:\R^d \to \R$ is continuous, $0< \inf_{u\in \R^d} \|\bz_l(u)\| <\sup_{u\in \R^d} \|\bz_l(u)\|<\infty$.
  
\item \label{c:asymptotic} The sequence $\left(\tau_n:=\theta_n
    |W_n|\right)_{n\ge 1}$ is an increasing sequence, such that\linebreak
  $\liminf_{n\to \infty} \theta_n >0$ and $\lim_{n\to \infty} \tau_n
  =\infty$. The sets $W_n$ are convex and compact. 

\item \label{c:as} As $n\to \infty$, $\tau_n^{-1} e_n(\bbeta_{l,n}^*) \to 0$ almost surely.
	
\item \label{c:eigen} For any $\bbeta_l \in \R^{p_l}$ and $n\ge 1$, $\bS_n(\bbeta_l)$ is positive definite. In addition, 
  for any $n\ge 1$, there exists a 
  set $B_n \subseteq W_n$  such that $|B_n|\asymp |W_n|$ 
 and a $c>0$ such that $\inf_{n} \inf_{\boldsymbol \phi\in \R^{p_l}, \|\boldsymbol \phi\|=1} \inf_{u\in B_n} |\boldsymbol\phi^\top \bz_l(u) | \ge c$. Finally, we assume that  $\liminf_{n\to \infty} 
\nu_{\min }\left( \tau_n^{-1}  \boldsymbol{\Sigma}_{l,n}
\right)>0$. 

\item \label{c:variance} {As $n \to \infty$,} $\|\boldsymbol \Sigma_{l,n}\| = \mathcal O(\tau_n)$.

\item \label{c:clt} As $n\to \infty$, $\boldsymbol \Sigma_{l,n}^{-1/2}
  e_n(\bbeta_{l,n}^*) \to N(0,\mathbf I_{p_l})$ in distribution.
\end{enumerate}

We can then state the following asymptotic result which is verified in Appendix~\ref{sec:mlestar}.
\begin{theorem} \label{thm:lwpv} ${ }$ \\

\noindent(i) Assume conditions C\ref{c:compact}-C\ref{c:as} hold, then there exists $v=v_n(\hat \bbeta_{l,n},\bbeta_{l,n}^*)$,
such that  almost surely 
\begin{equation}
	\label{eq:betaEst}
	\hat \bbeta_{l,n} -\bbeta_{l,n}^* \; = \; \frac{\bz_l(v)}{\|\bz_l(v)\|^2} \, \log \left[
1+ \tau_n^{-1} {e_n(\bbeta_{l,n}^*)}^\top \frac{\bz_l(v)}{\|\bz_l(v)\|^2} \exp\{ - {\bbeta_{l,n}^*}^\top \bz_l(v)\}
	\right].
\end{equation}
and $\hat \bbeta_{l,n}-\bbeta_{l,n}^* \to 0$ almost surely as $n\to \infty$. \\
(ii) Assume conditions C\ref{c:compact}-C\ref{c:asymptotic} and C\ref{c:eigen}-C\ref{c:variance} hold. Then, $\hat \bbeta_{l,n}$ is a root-$\tau_n$ consistent estimator of $\bbeta_{l,n}^*$, i.e.\
			\begin{equation} \label{eq:roottaunstar}
			\hat \bbeta_{l,n} -\bbeta_{l,n}^* =  \mathcal O_P( \tau_n^{-1/2}).   
			\end{equation}

\noindent (iii) If in addition, C\ref{c:clt} holds, then as $n\to \infty$,
\begin{equation} \label{eq:cltbetastar}
\boldsymbol \Sigma_{l,n}^{-1/2}   \bS_n(\bbeta_{l,n}^*) \left( \hat \bbeta_{l,n} -\bbeta_{l,n}^*\right) \to N(0,\mathbf I_{p_l})
\end{equation}
in distribution.
\end{theorem}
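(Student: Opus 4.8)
The plan is to reduce all three parts to one exact identity coming from the score equation. By concavity of the Poisson log-likelihood \eqref{eq:loglikelihood} and positive definiteness of its Hessian (C\ref{c:eigen}), the maximiser $\hat\bbeta_{l,n}$ in \eqref{eq:bbetalEst} is the unique root of $e_n$, so $e_n(\hat\bbeta_{l,n})=0$. Writing $\bDelta_n=\hat\bbeta_{l,n}-\bbeta_{l,n}^*$ and using $\dd e_n/\dd\bbeta_l^\top=-\bS_n$ together with the fundamental theorem of calculus gives the exact representation
\[ e_n(\bbeta_{l,n}^*)=\bar{\bS}_n\,\bDelta_n=\theta_n\int_{W_n}\bz_l(u)\exp\{{\bbeta_{l,n}^*}^\top\bz_l(u)\}\big[\exp\{\bDelta_n^\top\bz_l(u)\}-1\big]\,\dd u, \]
where $\bar{\bS}_n=\int_0^1\bS_n(\bbeta_{l,n}^*+s\bDelta_n)\,\dd s$ is positive definite. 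This identity, together with the unbiasedness \eqref{eq:unbiased}, is the engine of the proof.

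For part (i) I would convert the integral identity into the explicit form \eqref{eq:betaEst} by a mean value argument. Since $\bz_l$ is continuous (C\ref{c:boundedcov}) and $W_n$ is convex and compact (C\ref{c:asymptotic}), a mean value theorem for integrals allows the integral above to be replaced by $\tau_n=\theta_n|W_n|$ times its integrand evaluated at a single point $v=v_n\in W_n$; projecting the resulting relation onto $\bz_l(v)/\|\bz_l(v)\|^2$, solving for $\bDelta_n^\top\bz_l(v)$ and applying the logarithm yields \eqref{eq:betaEst} (the scalar case $p_l=1$, where $\bz_l\equiv1$, already exhibits the identity explicitly). Almost sure convergence then follows directly from \eqref{eq:betaEst}: by C\ref{c:boundedcov} the vector $\bz_l(v)/\|\bz_l(v)\|^2$ is bounded and by C\ref{c:compact} so is $\exp\{-{\bbeta_{l,n}^*}^\top\bz_l(v)\}$, while C\ref{c:as} forces $\tau_n^{-1}e_n(\bbeta_{l,n}^*)\to0$; hence the argument of the logarithm tends to $1$ and $\bDelta_n\to0$ almost surely.

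For part (ii) I would argue by concavity rather than through \eqref{eq:betaEst}, since C\ref{c:as} is not assumed. The quantitative input is a uniform lower eigenvalue bound: for any unit $\boldsymbol\phi$ and any $\bbeta_l$ in a bounded neighbourhood of $\bbeta_{l,n}^*$,
\[ \boldsymbol\phi^\top\bS_n(\bbeta_l)\boldsymbol\phi\ge\theta_n\int_{B_n}c^2\exp\{\bbeta_l^\top\bz_l(u)\}\,\dd u\ \gtrsim\ \tau_n, \]
using the set $B_n$ and constant $c$ of C\ref{c:eigen} together with C\ref{c:compact}-C\ref{c:boundedcov} and $|B_n|\asymp|W_n|$, so $\nu_{\min}(\bS_n(\bbeta_l))\ge C\tau_n$. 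On the other hand \eqref{eq:unbiased} and $\|\boldsymbol\Sigma_{l,n}\|=\mathcal O(\tau_n)$ (C\ref{c:variance}) give $\|e_n(\bbeta_{l,n}^*)\|=\mathcal O_P(\tau_n^{1/2})$ by Markov's inequality. Expanding $\ell_n(\bbeta_{l,n}^*+\tau_n^{-1/2}\mathbf s)-\ell_n(\bbeta_{l,n}^*)$ to second order then produces a linear term of order $\mathcal O_P(\|\mathbf s\|)$ and a quadratic term bounded above by $-\tfrac12 C\|\mathbf s\|^2$; for $\|\mathbf s\|$ large the difference is negative with high probability, so by concavity the maximiser lies within $\mathcal O_P(\tau_n^{-1/2})$ of $\bbeta_{l,n}^*$, which is \eqref{eq:roottaunstar}.

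For part (iii) I return to the identity $e_n(\bbeta_{l,n}^*)=\bar{\bS}_n\bDelta_n$ and write
\[ \boldsymbol\Sigma_{l,n}^{-1/2}\bS_n(\bbeta_{l,n}^*)\bDelta_n=\boldsymbol\Sigma_{l,n}^{-1/2}e_n(\bbeta_{l,n}^*)+\boldsymbol\Sigma_{l,n}^{-1/2}\big\{\bS_n(\bbeta_{l,n}^*)-\bar{\bS}_n\big\}\bDelta_n. \]
The first term converges to $N(0,\mathbf I_{p_l})$ by C\ref{c:clt}, so it remains to show the second is $o_P(1)$. Boundedness of $\bz_l$ and of $\bbeta_{l,n}^*$ (C\ref{c:boundedcov}, C\ref{c:compact}) gives $\|\bS_n(\bbeta_{l,n}^*)-\bar{\bS}_n\|=\mathcal O(\tau_n\|\bDelta_n\|)$, while $\nu_{\min}(\boldsymbol\Sigma_{l,n})\gtrsim\tau_n$ (C\ref{c:eigen}) yields $\|\boldsymbol\Sigma_{l,n}^{-1/2}\|=\mathcal O(\tau_n^{-1/2})$; combined with the rate \eqref{eq:roottaunstar} the remainder is $\mathcal O_P(\tau_n^{1/2}\|\bDelta_n\|^2)=\mathcal O_P(\tau_n^{-1/2})=o_P(1)$, and Slutsky's theorem gives \eqref{eq:cltbetastar}. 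The step I expect to be most delicate is the single-point mean value extraction in part (i): a mean value theorem for integrals does not hold for general vector-valued integrands, so collapsing $\int_{W_n}\bz_l(u)(\cdots)\,\dd u$ to one evaluation point $v_n$—and in particular obtaining the proportionality of $\bDelta_n$ to $\bz_l(v_n)$—must genuinely exploit the log-linear structure and the constant first coordinate of $\bz_l$.
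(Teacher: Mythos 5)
Your proposal is correct, and parts (i) and (iii) essentially retrace the paper's own proof: the exact score identity $e_n(\bbeta_{l,n}^*)=\bar{\bS}_n\bDelta_n$ with $\bar{\bS}_n=\int_0^1\bS_n(\bbeta_{l,n}^*+s\bDelta_n)\,\dd s$, Fubini in $s$, the single-point collapse for (i), and for (iii) the decomposition of $\bSigma_{l,n}^{-1/2}\bS_n(\bbeta_{l,n}^*)\bDelta_n$ into $\bSigma_{l,n}^{-1/2}e_n(\bbeta_{l,n}^*)$ plus a remainder killed by \eqref{eq:roottaunstar}, C\ref{c:eigen} and Slutsky. The genuine divergence is part (ii). The paper verifies conditions G\ref{cond:tozero}--G\ref{cond:unormal} of a modified \citet[Theorem~2]{waagepetersen:guan:09} (Theorem~\ref{thm:general} in Appendix~\ref{sec:wg09}), i.e.\ an Aitchison--Silvey-type argument that the score field points inward on the sphere of radius $d\tau_n^{-1/2}$, the main labour being the Hessian-continuity condition G\ref{cond:contgeneral} via the Lipschitz bound \eqref{eq:diffSnstar}. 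You instead use that $\ell_n$ is globally strictly concave (its Hessian $-\bS_n(\bbeta_l)$ is negative definite everywhere by C\ref{c:eigen}) and run a direct second-order expansion on that same sphere, from the same two quantitative inputs the paper needs: $\nu_{\min}\{\bS_n(\bbeta_l)\}\gtrsim\tau_n$ locally uniformly around $\bbeta_{l,n}^*$ (your computation from C\ref{c:compact}, C\ref{c:boundedcov}, C\ref{c:eigen}) and $\|e_n(\bbeta_{l,n}^*)\|=\mathcal O_P(\tau_n^{1/2})$ (Chebyshev from \eqref{eq:unbiased} and C\ref{c:variance}). Your route is more elementary and self-contained: concavity upgrades the boundary comparison to existence, uniqueness and location of the global maximiser with no external lemma. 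The paper's route buys generality, since Theorem~\ref{thm:general} applies to estimating functions that are not gradients of concave criteria, which is presumably why it is isolated as a standalone appendix result. Two further points. In (iii), replacing the paper's single intermediate point $\tilde\bbeta_{l,n}$ by the averaged matrix $\bar{\bS}_n$ is a small gain in rigor, since the mean value theorem fails for the vector-valued map $\bbeta_l\mapsto e_n(\bbeta_l)$, and your $\mathcal O_P$ bookkeeping goes through unchanged. And you are right that the single-point collapse in (i) is the delicate step: the paper invokes a ``mean value theorem for multiple integrals'' for a vector-valued integrand and then deduces \eqref{eq:betaEst} by ``a little algebra'', which implicitly requires $\bDelta_n$ to be proportional to $\bz_l(v)$ --- a point neither the paper nor your sketch fully justifies, so your closing caveat flags a weakness shared with (not created by) the published argument.
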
  

We stress  that Theorem~\ref{thm:lwpv} (ii)-(iii) do not require the strong consistency of $\tau_n^{-1} e_n(\bbeta_{l,n}^*)$ to 0, i.e. condition C\ref{c:as}. We conclude by some remarks regarding the assumptions C\ref{c:compact}-C\ref{c:clt}.
Condition C\ref{c:compact} ensures that the sequence of `least wrong parameter value` does not diverge with $n$.

Condition~C\ref{c:asymptotic} is different from existing conditions as it embraces both of the standard asymptotic frameworks considered in the literature: 
\begin{itemize}
\item infill asymptotics: $W_n=W$ with $W$ a bounded set of $\R^d$ and $\theta_n \to \infty$ as $n\to \infty$.
\item increasing domain asymptotics: $\theta_n=\theta>0$ and $(W_n)_{n\geq 1}$ is a sequence of bounded domains of $\R^d$ such that $|W_n|\to \infty$.
\end{itemize}
It is also valid if both asymptotics are considered at the same time. The assumed convexity in C\ref{c:asymptotic} enables the use of the mean value theorem in the proofs of our theoretical results.

In Condition C\ref{c:boundedcov}, assuming an upper bound for
$\|\bz_l(u)\|$ is quite standard. The upper bound further implies that assuming a
  lower bound is not really restrictive since for each covariate $z_j$
  we can always find some $k$ so that $|z_j(u)+k| \neq 0$ for any $u
  \in \R^d$. Replacing $z_j$ by $z_j+k$ while changing the intercept
  from $\beta_0$ to $\beta_0-\beta_jk$ leaves the model unchanged.
  The continuity assumption is used to prove the strong consistency of $\hat \bbeta_{l,n}$.

Condition C\ref{c:as} is also used to ensure the strong consistency of $\hat \bbeta_{l,n}$. This condition can be seen as a law of large numbers.
In Example~\ref{ex:PC}, we present a class of models where such an assumption is valid under the generalized asymptotic condition C\ref{c:asymptotic}. We can observe that if there exists $p>0$ such that $\EE\{ \|\tau_n^{-1/2}e_n(\bbeta_{l,n}^*\|^p\}=\mathcal O(1)$ and $\sum_{n} \tau_n^{-p/2}<\infty$, then C\ref{c:as} ensues from an application of Borel-Cantelli's lemma. At least in the increasing domain framework, assuming such a moment assumption is quite standard to derive a central limit theorem.

Condition C\ref{c:eigen} is very similar to the assumption required by
\cite{rathbun:cressie:94} under the Poisson case or by \cite{waagepetersen:guan:09} for more general point processes, within the increasing domain asymptotic
framework. 
Note in particular that C\ref{c:eigen} combined with C\ref{c:compact}-C\ref{c:boundedcov} ensures that $\liminf_{n\to \infty} \nu_{\min}\{\tau_n^{-1} \bS_n(\bbeta_{l,n}^*) \}>0$.

Under the Poisson case, $g_n=1$ and so C\ref{c:variance} is obviously satisfied if \linebreak $\sup_{u\in W_n} \lambda_n(u) = \mathcal O(\theta_n)$. For more general point processes, assume further that $g_n=g$ does not depend on $n$ and is invariant under translations. Then, thanks to C\ref{c:boundedcov}, the assumption
\[
	\int_{\R^d} \{g(o,w)-1\} \dd w <\infty.
      \]
      will imply C\ref{c:variance}.
This assumption is quite standard and satisfied by a large class of models, see e.g. \citet{waagepetersen:guan:09}. 

Continuing within the increasing domain framework, C\ref{c:clt} was established by \citet{rathbun:cressie:94} in the Poisson case, by \citet{guan:loh:07} and \citet{waagepetersen:guan:09} for $\alpha$-mixing point processes, and by \citet{lavancier:poinas:waagepetersen:20} for determinantal point processes. The infill asymptotic framework was used to establish C\ref{c:clt} in case of Poisson cluster processes in \citet{waagepetersen:07}. However, the `double' asymptotic framework has never been considered. Below, we provide an example of a model which satisfies C\ref{c:as}, C\ref{c:variance} and C\ref{c:clt} under the new general asymptotic setting C\ref{c:asymptotic}.

\begin{example} \label{ex:PC}
Let $\bC_n$ be a homogeneous Poisson point process on $\R^d$
  with intensity $\theta_n$. Given $\bC_n$, let
  $\bX_{n,c}$, $c \in \bC_n$, be independent inhomogeneous Poisson
point processses on $W_n$ with intensity $\alpha k(u-c)\rho(u)$ where $\alpha>0$, $k$ is a symmetric
density on $\R^d$, and $\rho$ is a non-negative bounded function. Then, $\bX_n = \cup_{c\in \bC_n} \bX_{n,c}$ is an inhomogeneous Poisson cluster point process (with inhomogeneous offspring). It can be shown that $\lambda_n(u) = \theta_n \alpha \rho(u)$ 
and $g_n(u,v) = 1+ {(k \!*\!k)(v-u)}/{\theta_n}$ where the notation
$*$ denotes convolution. 
Assuming that $\sup_u \rho(u)=\mathcal O(1)$, there exists $K\ge 0$ such that
	\begin{align*}
			\| \boldsymbol \Sigma_{l,n}\| &\le K \left\{ \tau_n +
                                           \theta_n^2 \int_{W_n}
                                           \int_{W_n}
                                           |g_n(u,v)-1| \dd u
                                           \dd v 	\right\}\\
			&\le K  \left\{\tau_n+\theta_n \int_{W_n}
                   \int_{W_n} |(k*k)(v-u)| \dd u \dd v \right\}\\
			& \le K  \left\{\tau_n+ \tau_n \int_{\R^d} |(k*k)(w)| \dd w \right\}= \mathcal O(\tau_n).
	\end{align*}
Thus C\ref{c:variance} is satisfied.	
In Appendix~\ref{app:cltICPP} we show that this model also satisfies C\ref{c:as} and
C\ref{c:clt} within the `double' asymptotic framework. Along the
same lines one can show that C\ref{c:clt} holds for the inhomogeneous
Poisson point process with intensity function $\lambda_n(u)=\theta_n\rho(u)$.
\end{example}

\section{Akaike  and composite information criteria}\label{sec:cic}

One criterion for model selection would be to choose the model that
minimizes the Kullback-Leibler divergence, i.e.\ the model for which
\eqref{eq:kl} evaluated at $\bbeta_{l,n}^*$ is smallest. Of course
this criterion is not useful in practice since the true model is unknown.
Following \cite{varin:vidoni:05} we instead choose the model that
minimizes an estimate of the expected value of \eqref{eq:kl}
evaluated at $\hat \bbeta_{l,n}$, or equivalently, that minimizes an estimate of $2\EE C_n(\hat \bbeta_{l,n})$ with $C_n(\bbeta_l)=- \EE \ell_n(\bbeta_l)$. We follow \citet[Lemmas 1-2]{varin:vidoni:05} to derive in our context the following result.

\begin{proposition}\label{prop:varinvidoni}
Assume conditions C\ref{c:compact}-C\ref{c:asymptotic} and C\ref{c:eigen}-C\ref{c:variance} hold. Also assume that there exists $\varepsilon>0$ such that
\begin{equation} \label{eq:assumptionUI}
	\sup_{n}\EE \left\{ \; \left\| e_n(\bbeta_{l,n}^*)^\top
              \mathbf M_n  e_n(\bbeta_{l,n}^*)
            \right\|^{1+\varepsilon} \; \right\} < \infty
\end{equation}
where $\mathbf M_n = \bS_n(\tilde \bbeta_{l,n})^{-1} - \bS_n(\bbeta_{l,n}^*)^{-1}$ and $\tilde \bbeta_{l,n}$ is on the line segment between $\hat \bbeta_{l,n}$ and $\bbeta_{l,n}^*$.
Then,
\[
\EE \left\{ C_n(\hat \bbeta_{l,n}) \right\} = \EE\left\{ -\ell_n(\hat \bbeta_{l,n})\right\} +
\mathrm{trace} \left\{ \bS_n(\bbeta_{l,n}^*)^{-1} \bSigma_{l,n}\right\} + o(1).
\]
In other words, $-2\ell_n(\hat
\bbeta_l)+2\mathrm{trace}\{\bS_n(\bbeta_{l,n}^*)^{-1} \boldsymbol
  \Sigma_{l,n}\}$ is an asymptotically unbiased estimator of $2\EE \{
C_n(\hat \bbeta_{l,n}) \}$.
\end{proposition}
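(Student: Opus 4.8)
The plan is to exploit a structural feature of the Poisson log-likelihood \eqref{eq:loglikelihood}: its only random part is the linear term $\sum_{u\in\bX_n}\bbeta_l^\top\bz_l(u)=\bbeta_l^\top\sum_{u\in\bX_n}\bz_l(u)$, the integral being deterministic. Hence the fluctuation $\ell_n(\bbeta_l)-\EE\ell_n(\bbeta_l)$ is \emph{exactly} linear in $\bbeta_l$, and since the centred sum $\sum_{u\in\bX_n}\bz_l(u)-\int_{W_n}\bz_l(u)\lambda_n(u)\dd u$ equals $e_n(\bbeta_{l,n}^*)$ by virtue of \eqref{eq:unbiased}, one obtains the exact identity $\ell_n(\bbeta_l)=-C_n(\bbeta_l)+\bbeta_l^\top e_n(\bbeta_{l,n}^*)$, where $C_n(\bbeta_l)=-\EE\ell_n(\bbeta_l)$. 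Since this holds as an identity of functions of $\bbeta_l$, it may be evaluated at the random argument $\hat\bbeta_{l,n}$, collapsing the two quantities of interest into a single term, $C_n(\hat\bbeta_{l,n})+\ell_n(\hat\bbeta_{l,n})=\hat\bbeta_{l,n}^\top e_n(\bbeta_{l,n}^*)$, so that $\EE\{C_n(\hat\bbeta_{l,n})\}-\EE\{-\ell_n(\hat\bbeta_{l,n})\}=\EE\{\hat\bbeta_{l,n}^\top e_n(\bbeta_{l,n}^*)\}$. This shortcut, specific to the log-linear Poisson likelihood, removes the need for the second Taylor expansion of the objective in the general argument of \cite{varin:vidoni:05}, and I expect it is why \eqref{eq:assumptionUI} concerns only a single quadratic form.

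Set $e_n^*:=e_n(\bbeta_{l,n}^*)$. Because $\EE e_n^*=0$ by \eqref{eq:unbiased}, I recentre to $\EE\{\hat\bbeta_{l,n}^\top e_n^*\}=\EE\{(\hat\bbeta_{l,n}-\bbeta_{l,n}^*)^\top e_n^*\}$. I then linearise the estimating equation $e_n(\hat\bbeta_{l,n})=0$: since the Jacobian $\dd e_n/\dd\bbeta_l^\top=-\bS_n(\bbeta_l)$ is deterministic and smooth, an integrated mean-value expansion gives $\hat\bbeta_{l,n}-\bbeta_{l,n}^*=\bS_n(\tilde\bbeta_{l,n})^{-1}e_n^*$, where $\bS_n(\tilde\bbeta_{l,n})$ abbreviates the averaged sensitivity $\int_0^1\bS_n\{\bbeta_{l,n}^*+t(\hat\bbeta_{l,n}-\bbeta_{l,n}^*)\}\dd t$ along the segment, in the convention of the statement. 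Adding and subtracting $\bS_n(\bbeta_{l,n}^*)^{-1}$ yields
\[
\EE\{(\hat\bbeta_{l,n}-\bbeta_{l,n}^*)^\top e_n^*\}=\EE\{{e_n^*}^\top\bS_n(\bbeta_{l,n}^*)^{-1}e_n^*\}+\EE\{{e_n^*}^\top\mathbf M_n e_n^*\}.
\]
In the first term $\bS_n(\bbeta_{l,n}^*)^{-1}$ is deterministic, $\EE e_n^*=0$, and $\Var(e_n^*)=\bSigma_{l,n}$ (the integral part of \eqref{eq:score} is deterministic, so the variance is that of the point sum, namely \eqref{eq:Sigmanl}); hence the elementary identity $\EE(X^\top A X)=\mathrm{trace}\{A\,\Var(X)\}$ for a centred $X$ delivers exactly $\mathrm{trace}\{\bS_n(\bbeta_{l,n}^*)^{-1}\bSigma_{l,n}\}$, the required leading term.

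It remains to show the residual $r_n:=\EE\{{e_n^*}^\top\mathbf M_n e_n^*\}$ is $o(1)$, which I view as the crux. I would factor ${e_n^*}^\top\mathbf M_n e_n^*=(\tau_n^{-1/2}e_n^*)^\top(\tau_n\mathbf M_n)(\tau_n^{-1/2}e_n^*)$ and proceed in two stages. First, convergence in probability to $0$: condition~C\ref{c:variance} with Chebyshev gives $\tau_n^{-1/2}e_n^*=\mathcal O_P(1)$, while $\tau_n\mathbf M_n=\{\tau_n^{-1}\bS_n(\tilde\bbeta_{l,n})\}^{-1}-\{\tau_n^{-1}\bS_n(\bbeta_{l,n}^*)\}^{-1}\to0$ in probability. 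The latter holds because the normalised sensitivity $\tau_n^{-1}\bS_n(\bbeta_l)=|W_n|^{-1}\int_{W_n}\bz_l(u)\bz_l(u)^\top\exp\{\bbeta_l^\top\bz_l(u)\}\dd u$ is locally Lipschitz in $\bbeta_l$ with constant uniform in $n$ (by C\ref{c:boundedcov} and the boundedness of $\bbeta_{l,n}^*$ from C\ref{c:compact}), has smallest eigenvalue bounded away from $0$ (C\ref{c:eigen} and the remark following it), and $\hat\bbeta_{l,n}-\bbeta_{l,n}^*\to0$ in probability by Theorem~\ref{thm:lwpv}(ii), whence $\tilde\bbeta_{l,n}-\bbeta_{l,n}^*\to0$; standard perturbation of matrix inverses then closes the argument.

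Second, I upgrade convergence in probability to convergence of the expectation. This interchange of limit and expectation for an \emph{unbounded} quadratic form is the delicate step, and it is precisely what assumption~\eqref{eq:assumptionUI} is designed to supply: the uniform $(1+\varepsilon)$-moment bound on ${e_n^*}^\top\mathbf M_n e_n^*$ renders the sequence uniformly integrable, so that ${e_n^*}^\top\mathbf M_n e_n^*\to0$ in probability forces $r_n\to0$. Collecting the leading trace term with this vanishing remainder gives $\EE\{C_n(\hat\bbeta_{l,n})\}=\EE\{-\ell_n(\hat\bbeta_{l,n})\}+\mathrm{trace}\{\bS_n(\bbeta_{l,n}^*)^{-1}\bSigma_{l,n}\}+o(1)$, as claimed. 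The main obstacle is this final uniform-integrability step; everything preceding it is either exact algebra afforded by the linear-in-$\bbeta_l$ structure of the Poisson likelihood or a direct consequence of the stated eigenvalue and moment conditions.
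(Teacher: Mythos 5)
Your proof is correct, and it derives the paper's pivotal intermediate identity by a genuinely different, more elementary route. The paper follows \citet[Lemmas 1--2]{varin:vidoni:05}: it introduces an independent copy $\bZ_n$ of $\bX_n$, Taylor-expands $\ell_n(\hat\bbeta_{l,n};\bZ_n)$ and $\ell_n(\hat\bbeta_{l,n};\bX_n)$ to first order around $\bbeta_{l,n}^*$ with integral remainders, observes that these remainders are deterministic functions of the endpoints (because the Hessian of $\ell_n$ is deterministic), and cancels them to reach $\EE\{C_n(\hat\bbeta_{l,n})\}-\EE\{-\ell_n(\hat\bbeta_{l,n})\}=\EE\{e_n(\bbeta_{l,n}^*)^\top(\hat\bbeta_{l,n}-\bbeta_{l,n}^*)\}$, which is \eqref{eq:Cn-ln}. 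You reach the same identity exactly, with no independent copy and no expansions, by noting that the random part of \eqref{eq:loglikelihood} is linear in $\bbeta_l$, so that pathwise $\ell_n(\bbeta_l)+C_n(\bbeta_l)=\bbeta_l^\top e_n(\bbeta_{l,n}^*)$ --- the coefficient being identified with $e_n(\bbeta_{l,n}^*)$ through \eqref{eq:unbiased} --- and then recentring using $\EE\, e_n(\bbeta_{l,n}^*)=0$. Your shortcut is special to the log-linear Poisson composite likelihood, while the paper's derivation is the generic template that would survive a random Hessian; in the present setting both hinge on the same structural fact, but yours turns it into exact algebra rather than a cancellation of Taylor remainders. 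From that identity onward the two proofs coincide: solve $e_n(\hat\bbeta_{l,n})=0$ along the segment to express $\hat\bbeta_{l,n}-\bbeta_{l,n}^*$ in terms of $e_n(\bbeta_{l,n}^*)$, add and subtract $\bS_n(\bbeta_{l,n}^*)^{-1}$, use $\EE(X^\top A X)=\mathrm{trace}\{A\,\Var(X)\}$ with $\Var\{e_n(\bbeta_{l,n}^*)\}=\bSigma_{l,n}$ to extract $\mathrm{trace}\{\bS_n(\bbeta_{l,n}^*)^{-1}\bSigma_{l,n}\}$, prove $e_n(\bbeta_{l,n}^*)^\top\mathbf M_n e_n(\bbeta_{l,n}^*)\to 0$ in probability (C\ref{c:variance} giving $\tau_n^{-1/2}e_n(\bbeta_{l,n}^*)=\mathcal O_\P(1)$, the uniform-in-$n$ Lipschitz property of $\tau_n^{-1}\bS_n(\cdot)$, the eigenvalue bounds, and consistency from Theorem~\ref{thm:lwpv}(ii)), and upgrade to $o(1)$ in expectation via the uniform integrability supplied by \eqref{eq:assumptionUI} --- exactly the paper's ending. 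A minor merit of your write-up is that you work with the integrated sensitivity $\int_0^1\bS_n\{\bbeta_{l,n}^*+t(\hat\bbeta_{l,n}-\bbeta_{l,n}^*)\}\dd t$ rather than a single intermediate point $\tilde\bbeta_{l,n}$; that is the technically correct mean-value form for a vector-valued estimating function (one then reads \eqref{eq:assumptionUI} with $\mathbf M_n$ defined accordingly), whereas the paper's single-point $\tilde\bbeta_{l,n}$ is a standard but, strictly speaking, invalid use of the mean value theorem.
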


\begin{remark} The technical condition~\eqref{eq:assumptionUI} implies
    that the sequence \linebreak$\{ \tilde e_n :=e_n(\bbeta_{l,n}^*)^\top \mathbf M_n
    e_n(\bbeta_{l,n}^*)\}_{n\ge 1}$   is a uniformly integrable
    sequence of random variables. This ensures  that convergence in
    probability of $\tilde e_n$ to 0 implies that $\EE \tilde e_n=o(1)$.
\end{remark}

To estimate the effective degrees of freedom
$p^*_l=\text{tr}\left\{\bS_n(\bbeta_{l,n}^*)^{-1} \boldsymbol \Sigma_{l,n} \right\}$ we first
simply estimate $\bS_n(\bbeta_{l,n}^*)$ by $\bS_n(\hat \bbeta_{l,n})$. The
estimation of $\boldsymbol \Sigma_{l,n}$ is more difficult. Following \citet[p.\ 31]{claeskens:hjort:08}, note that if
${\rho_n}(\cdot;\bbeta_{l,n}^*)$ coincides with the true intensity function
of $\bX_n$, then $\boldsymbol \Sigma_{l,n}$ coincides with
$\boldsymbol  \Sigma_n(\bbeta^*_{l,n})$ given by $\bS_n(\bbeta_{l,n}^*)+ T_2$ where
\[
   T_2:=\int_{W_n^2} \bz_l(u) \bz_l^\top (v)
  {\rho_n}(u;\bbeta_{l,n}^*) {\rho_n}(v;\bbeta_{l,n}^*) \left\{ g_n(u,v)-1 \right\}
  \dd u\dd v.
\]
In this case we get
\begin{align*}
\text{trace} \{\bS_n(\bbeta_{l,n}^*)^{-1} \boldsymbol \Sigma_n(\bbeta_{l,n}^*)\}  
 &		 = \text{trace} \{\mathbf{I}_{p_l} + \bS_n(\bbeta_{l,n}^*)^{-1}  T_2\} \\	
 &	 = p_l + \text{trace} \{\bS_n(\bbeta_{l,n}^*)^{-1} T_2\}.
\end{align*}
We propose to use $p_{l,\text{approx}}^*=p_l+ \text{trace} \{ \bS_n(\bbeta_{l,n}^*)^{-1} T_2 \}$ as an approximation of $p_l^*$.  In practice we replace $\bbeta_{l,n}^*$ by $\hat \bbeta_{l,n}$ and $g_n$ by an estimate obtained by fitting a valid parametric model for $g_n$ and thus obtain $\hat p_{l,\text{approx}}^*$. Our composite model selection criterion then becomes
\begin{equation}\label{eq:cic}    \mathrm{CIC}_l= -2l_n(\hat \bbeta_{l,n})   +2 \hat p_{l,\text{approx}}^*. \end{equation}

For a Poisson process, $g_n=1$ in which case $\hat p_{l,\text{approx}}^*=p_l$ and \eqref{eq:cic} reduces to the popular Akaike's information criterion.
For a clustered point process, $g_n>1$ meaning that $\hat p_{l,\text{approx}}^*>p_l$. Thus we penalize more the complexity of the model in the case of a clustered point process. This seems to make sense since random clustering of points (i.e.\ not due to covariates) may erroneously be picked up by covariates that actually had no effect in the data generating mechanism. Hence there is a greater risk of picking a too complex model for the intensity function in case of a clustered point process than for a Poisson process.


\section{Bayesian information criterion}\label{sec:bic}

The motivation of the Bayesian information criterion (BIC) is quite
different from the derivation of the AIC and CIC criteria considered
in the previous section. A main difference is that there is initially no
reference to a Kullback-Leibler distance or asymptotics related to a
`least false parameter value'. Instead, the true model is considered
to be one of the models ${\mathcal M}_l$ and the idea is to
  choose the  model that has maximum posterior probability within the
  specified Bayesian framework. However, the asymptotic concepts again play a role in order to derive asymptotic expansions of the posterior probabilities. Section~\ref{sec:bicPoisson} covers the Poisson process case. Section~\ref{sec:clBIC} proposes a composite likelihood BIC in
  the case where data is not generated from a Poisson process. Note that in this case, it is still assumed that the true
  intensity function corresponds to one of intensity functions for the
  models ${\mathcal M}_l$.

\subsection{BIC in the Poisson process case} \label{sec:bicPoisson}
	
The BIC criterion (see
e.g. \citet{schwarz:78,lebarbier:mary-huard:06}) defines the
	best model $\mathcal M_{\bic}$ as 
	\begin{equation}
	\mathcal M_{\bic} = \argmax_{\mathcal M_l,l=1,\ldots,2^p} \; \P_n \left(  \mathcal  M_l \mid \bX_n \right).
	\end{equation}
	From Bayes formula, 
	\[
	\P_n ( \mathcal M_l \mid \bX_n) = \frac{p_n(\bX_n|{\mathcal M}_l) \P(\mathcal M_l)}{p_n(\bX_n)} .
	\]
 Letting $p(\bbeta_l|{\mathcal M}_l)$ denote the prior density of
 $\bbeta_l$ given ${\mathcal M}_l$,
\[ p_n(\bX_n|{\mathcal M}_l)=
  \int_{\mathbb R^{p_l}}p_n(\bX_n;\beta_l)p(\bbeta_l|{\mathcal M}_l) \dd
  \bbeta_l \]
since $p_n(\bX_n;\beta_l)$ is the conditional density of $\bX_n$ given $\mathcal M_l$ and $\bbeta_l$.
	We assume that the prior distribution over models is non-informative, so that the BIC criterion defines the best model as
	\begin{equation}\label{eq:bic}
	\mathcal M_{\bic} = \argmax_{{\mathcal M_l}, l=1,\dots,2^p} \; p_n \left(   \bX_n \mid \mathcal M_l\right).
      \end{equation}
      
In principle, one could evaluate the $p_n \left(   \bX_n \mid \mathcal M_l\right)$ using numerical quadrature and then determine $\mathcal M_{\bic}$. However, this is computationally costly and also the need to elicit a specific prior $p(\bbeta_l|{\mathcal M}_l)$ for each model may be a nuisance. Our next result therefore proposes an asymptotic expansion of $\log p_n \left(\bX_n \mid \mathcal M_l\right)$. The methodology is standard (basically a Laplace approximation) and well-known in the literature, see \citet{tierney:kadane:86} or e.g. \citet{lebarbier:mary-huard:06} and the references therein. However, due to our spatial framework and the double asymptotic point of view considered in this paper, the standard results do not apply straightforwardly. Due to the use of asymptotic results we again need to rely on the notions of a true model and the `least false parameter value' $\bbeta_{l,n}^*$.
	
We  impose the following conditions on the prior for $\bbeta_l$ and
the mean $\mu_n=\EE \bX_n$.
\begin{enumerate}
\renewcommand{\theenumi}{\arabic{enumi}}
\renewcommand{\labelenumi}{[C\theenumi]}
\addtocounter{enumi}{7}
\item \label{c:prior} The prior density $p(\bbeta_l\mid \mathcal M_l)$ of
	$\bbeta_l$ given ${\mathcal M}_l$ is continuously
        differentiable on $\R^{p_l}$.  
\item \label{c:mun} $\mu_n \asymp \tau_n$.
\end{enumerate}
Note that $\mu_n$ is the marginal mean of $\bX_n$ under the true
intensity model $\lambda_n$ as discussed in
Section~\ref{sec:modelselection}. Condition~C\ref{c:mun} seems reasonable
since it would hold if $\lambda_n$ coincided with any of the specified
parametric intensity models $\rho_n(\cdot;\bbeta_{l})$. 
We also need to slightly strengthen assumption C\ref{c:compact} and replace it by 
\begin{enumerate}
\renewcommand{\theenumi}{\arabic{enumi}$^\prime$}
\renewcommand{\labelenumi}{[C\theenumi]}
\addtocounter{enumi}{0}
\item \label{c:compact2} As $n\to \infty$, there exists $\bbeta_l^*\in \R^{p_l}$ such that $\lim_{n\to \infty} \bbeta_{l,n}^* =\bbeta_l^*$.
\end{enumerate}

The following result is verified in Appendix~\ref{sec:thmlaplace}
using \citet[Theorem~2]{lapinski:19}, which is a rigorous statement of a multivariate Laplace approximation. 
\begin{proposition} \label{thm:laplace}
Under the conditions C\ref{c:compact2},
C\ref{c:boundedcov}-C\ref{c:eigen} and C\ref{c:prior}-C\ref{c:mun}, we have,
almost surely with respect to the distribution of $\{\bX_n\}_{n
    \ge 1}$,  as $n\to \infty$,
\begin{align}
\log p_n(\bX_n \mid \mathcal M_l) =& \ell_n(\hat \bbeta_{l,n}) - \frac{p_l}{2} \log\left( \mu_n \right) \nonumber\\
& + \frac{p_l}2 \log(2\pi) -\frac12 \log \det \{\mu_n^{-1} \bS_n (\hat \bbeta_{l,n}) \}  \nonumber\\
&+ \log p ( \hat \bbeta_{l,n} \mid \mathcal M_l) +  {\mathcal O} (\mu_n^{-1/2}). \label{eq:expansion}
\end{align}
\end{proposition}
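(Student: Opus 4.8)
The plan is to read $p_n(\bX_n\mid\mathcal M_l)=\int_{\R^{p_l}}p_n(\bX_n;\bbeta_l)\,p(\bbeta_l\mid\mathcal M_l)\,\dd\bbeta_l$ as a Laplace integral and to expand its logarithm around the maximiser $\hat\bbeta_{l,n}$. Since $p_n(\bX_n;\bbeta_l)=\exp\{\ell_n(\bbeta_l)\}$ up to an additive term in the exponent that does not depend on $\bbeta_l$ (and is moreover common to all models $\mathcal M_l$, hence immaterial for the selection and absorbed here), it suffices to analyse $\log\int_{\R^{p_l}}\exp\{\ell_n(\bbeta_l)\}\,p(\bbeta_l\mid\mathcal M_l)\,\dd\bbeta_l$. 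First I would record the structural facts that drive everything: by~\eqref{eq:score}--\eqref{eq:sensitivity} the map $\ell_n$ has gradient $e_n$ and Hessian exactly $-\bS_n(\bbeta_l)$, a \emph{deterministic} matrix that is positive definite by C\ref{c:eigen}; hence $\ell_n$ is strictly concave, $\hat\bbeta_{l,n}$ is its unique maximiser, and $e_n(\hat\bbeta_{l,n})=0$.

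A second-order Taylor expansion of $\ell_n$ about $\hat\bbeta_{l,n}$, using $e_n(\hat\bbeta_{l,n})=0$, gives $\ell_n(\bbeta_l)=\ell_n(\hat\bbeta_{l,n})-\tfrac12(\bbeta_l-\hat\bbeta_{l,n})^\top\bS_n(\tilde\bbeta_{l,n})(\bbeta_l-\hat\bbeta_{l,n})$ for some $\tilde\bbeta_{l,n}$ on the segment. Replacing $\bS_n(\tilde\bbeta_{l,n})$ by $\bS_n(\hat\bbeta_{l,n})$ and $p(\cdot\mid\mathcal M_l)$ by $p(\hat\bbeta_{l,n}\mid\mathcal M_l)$ produces a Gaussian integral equal to $(2\pi)^{p_l/2}\det\{\bS_n(\hat\bbeta_{l,n})\}^{-1/2}$, so heuristically
\[
\log\!\int\!\exp\{\ell_n\}\,p\,\dd\bbeta_l \;\approx\; \ell_n(\hat\bbeta_{l,n})+\log p(\hat\bbeta_{l,n}\mid\mathcal M_l)+\tfrac{p_l}{2}\log(2\pi)-\tfrac12\log\det\bS_n(\hat\bbeta_{l,n}).
\]
To reach the form of~\eqref{eq:expansion} I would then use that $\hat\bbeta_{l,n}\to\bbeta_l^*$ almost surely (Theorem~\ref{thm:lwpv}(i) under C\ref{c:compact2}), together with C\ref{c:boundedcov}, C\ref{c:eigen}, C\ref{c:variance} and C\ref{c:mun}, to conclude that $\bS_n(\hat\bbeta_{l,n})\asymp\tau_n\asymp\mu_n$ with $\mu_n^{-1}\bS_n(\hat\bbeta_{l,n})$ having eigenvalues bounded away from $0$ and $\infty$; factoring $\det\bS_n(\hat\bbeta_{l,n})=\mu_n^{p_l}\det\{\mu_n^{-1}\bS_n(\hat\bbeta_{l,n})\}$ splits the last term into $-\tfrac{p_l}{2}\log\mu_n-\tfrac12\log\det\{\mu_n^{-1}\bS_n(\hat\bbeta_{l,n})\}$, as required.

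To turn the heuristic into the stated expansion with an explicit remainder I would apply \citet[Theorem~2]{lapinski:19} after rescaling $\bbeta_l=\hat\bbeta_{l,n}+\mathbf{t}$ and treating $\mu_n$ (equivalently $\tau_n$) as the large parameter. The hypotheses to check are (a) a strict interior maximum with Hessian comparable to $\tau_n$ uniformly over bounded $\bbeta_l$, immediate from $\bS_n(\bbeta_l)=\theta_n\int_{W_n}\bz_l\bz_l^\top\exp\{\bbeta_l^\top\bz_l\}\dd u$ together with C\ref{c:boundedcov} and the boundedness of $\hat\bbeta_{l,n}$; (b) a localisation bound ensuring the integral over $\{\|\bbeta_l-\hat\bbeta_{l,n}\|\ge\delta\}$ is exponentially negligible, which I would obtain from strict concavity plus the eigenvalue lower bound $\liminf_n\nu_{\min}(\tau_n^{-1}\bS_n(\hat\bbeta_{l,n}))>0$, yielding $\ell_n(\hat\bbeta_{l,n})-\ell_n(\bbeta_l)\ge c\,\tau_n\|\bbeta_l-\hat\bbeta_{l,n}\|^2$ locally and linear growth beyond; and (c) continuity and local boundedness of the amplitude $p(\cdot\mid\mathcal M_l)$, supplied by C\ref{c:prior}. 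The mere continuous differentiability of the prior is precisely what caps the remainder at $\mathcal O(\mu_n^{-1/2})$: the first-order term of $\log p$ integrates against an asymptotically symmetric Gaussian to lower order, and its $C^1$ remainder contributes only $o$ at the natural scale $\|\bbeta_l-\hat\bbeta_{l,n}\|\sim\mu_n^{-1/2}$.

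The entire argument is run on the almost-sure event where $\hat\bbeta_{l,n}\to\bbeta_l^*$; this is what makes the verifications uniform in $n$, keeping $\hat\bbeta_{l,n}$ in a fixed compact set so that $\bS_n(\hat\bbeta_{l,n})\asymp\tau_n$ and $p(\hat\bbeta_{l,n}\mid\mathcal M_l)$ stays bounded and bounded away from degeneracy. I expect the main obstacle to be the uniform localisation bound (b) under the double asymptotics of C\ref{c:asymptotic}: unlike the classical i.i.d.\ Laplace setting, the large parameter here is the spatial quantity $\tau_n=\theta_n|W_n|$ and the Hessian is the integral $\bS_n(\bbeta_l)$, so one must establish the quadratic lower bound on $\ell_n(\hat\bbeta_{l,n})-\ell_n(\bbeta_l)$ uniformly in $n$ and in direction. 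This is exactly where the covariate non-degeneracy on a set $B_n$ with $|B_n|\asymp|W_n|$ in C\ref{c:eigen} is needed. Casting these spatial eigenvalue conditions into the abstract hypotheses of \citet[Theorem~2]{lapinski:19}, and confirming that the resulting remainder is $\mathcal O(\mu_n^{-1/2})$ uniformly on the almost-sure event, is the crux of the proof.
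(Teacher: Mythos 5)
Your proposal is correct and follows essentially the same route as the paper: both read $p_n(\bX_n\mid\mathcal M_l)$ as a Laplace integral with large parameter $\tau_n\asymp\mu_n$, apply \citet[Theorem~2]{lapinski:19} on the almost-sure event where $\hat\bbeta_{l,n}\to\bbeta_l^*$, use the non-degeneracy set $B_n$ of condition C\ref{c:eigen} to obtain the uniform separation (gap) condition that is the crux under the double asymptotics, and finish with C\ref{c:mun} to trade $\tau_n$ for $\mu_n$. The only difference is in how that gap condition is verified --- you use a local quadratic lower bound (Hessian eigenvalues of order $\tau_n$ on compacts) plus concavity for linear growth beyond, while the paper bounds the explicit Taylor remainder via $f(t)=e^t-1-t\ge\delta>0$ on $B_n$ --- which amounts to the same ingredients arranged slightly differently.
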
 

The criterion \eqref{eq:bic} is defined entirely within the specified
Bayesian framework. Hence no reference to a true model and no need for
asymptotic results. However, this is changed when we derive the
expansion \eqref{eq:expansion} for \eqref{eq:bic}. The expansion is
around $\hat \bbeta_{l,n}$ and for technical reasons, when applying
the Laplace approximation, convergence of $\hat \bbeta_{l,n}$ is
needed. Therefore we need the assumptions that ensure strong
consistency of $\hat \bbeta_{l,n}$.

	Since $\mu_n \asymp \tau_n$ and from the strong consistency of $\hat \bbeta_{l,n}$, we have that \linebreak$\log \det \{ \mu_n^{-1}  \bS_n (\hat \bbeta_{l,n})\}= \mathcal O_\P(1)$ while  C\ref{c:prior} ensures that $\log p (\hat \bbeta_{l,n} \mid \mathcal M_l)$ is $\mathcal O_\P(1)$.
	So, if we neglect terms which are $ \mathcal O_\P(1)$  in~\eqref{eq:expansion}, we follow the standard heuristic and suggest to define a first version of the  BIC criterion as 
	\begin{equation}
	- 2 \ell_n(\hat \bbeta_{l,n}) + p_l \log (\mu_n)
	\end{equation}
	where we remind that $p_l$ is the length of $\bbeta_l$.
	
	In practice $\mu_n$ is not known. However, since $\Var N(W_n)=
	\mu_n$ it follows that
	$N(W_n)/\mu_n -1 = \mathcal O_\P(\mu_n^{-1/2})=o_\P(1)$. This justifies to define the $\bic$ criterion in the following natural way
	\begin{equation} \label{eq:ourbic}
	\bic_l = - 2 \ell_n(\hat \bbeta_{l,n}) + p_l \, \log \left\{ N(W_n) \right\}.
	\end{equation}

\subsection{Composite likelihood BIC}\label{sec:clBIC}

	Suppose $\bX_n$ has an intensity function of the form \eqref{eq:loglinear} but is not a Poisson process. Then as mentioned in Section~\ref{sec:correct}, \eqref{eq:loglikelihood} may be viewed as a composite likelihood score for estimating $\bbeta_l$. In this case, following \cite{gao:song:10}, \eqref{eq:bic} may be viewed as a composite likelihood BIC. Again we obtain \eqref{eq:ourbic} from \eqref{eq:bic} by Laplace approximation. However, \cite{gao:song:10} suggest to replace $p_l$ by the `effective degrees of freedom' $p_l^*$ considered in Section~\ref{sec:cic}. Thus our  proposed composite likelihood BIC is
\[ \cbic_l = - 2 \ell_n(\hat \bbeta_{l,n}) + p_{l,\text{approx}}^* \, \log \big\{ N(W_n)\big\} \]
which becomes equal to the ordinary BIC in \eqref{eq:ourbic} for a Poisson process. From a practical point of view, we simply estimate $p_{l,\text{approx}}^*$ as in~\eqref{eq:cic}.

\section{Simulation study} \label{sec:sim}

To evaluate the proposed model selection criteria we conduct two simulation
studies with $p=6$ spatial covariates, of which four have zero
effect. The first study in Section~\ref{sec:sim:pois} considers AIC
and BIC in the Poisson point process case. 
  The second study considers the clustered Thomas point process where we
employ the CIC and CBIC criteria to select the best model and compare with
results obtained using AIC and BIC (assuming wrongly that the
simulated data are from a Poisson point process).
	
The covariates are obtained from the BCI dataset
\citep{hubbell:foster:83,condit:hubbell:foster:96,condit:98} which in addition to locations
  of around 300 species of trees observed in $W=[0,1000]\times[0,500]$ ($m^2$)  
  contains a number of spatial covariates. In particular, we center and scale the two topological covariates (elevation and slope of elevation) and four soil nutrients (aluminium, boron, calcium, and copper). The six covariates are depicted in Figure~\ref{cov}. 
	
	\begin{figure}[!ht]
		\renewcommand{\arraystretch}{0}
		\setlength{\tabcolsep}{1pt}
		\begin{tabular}{l l l}
			\includegraphics[width=0.33\textwidth]{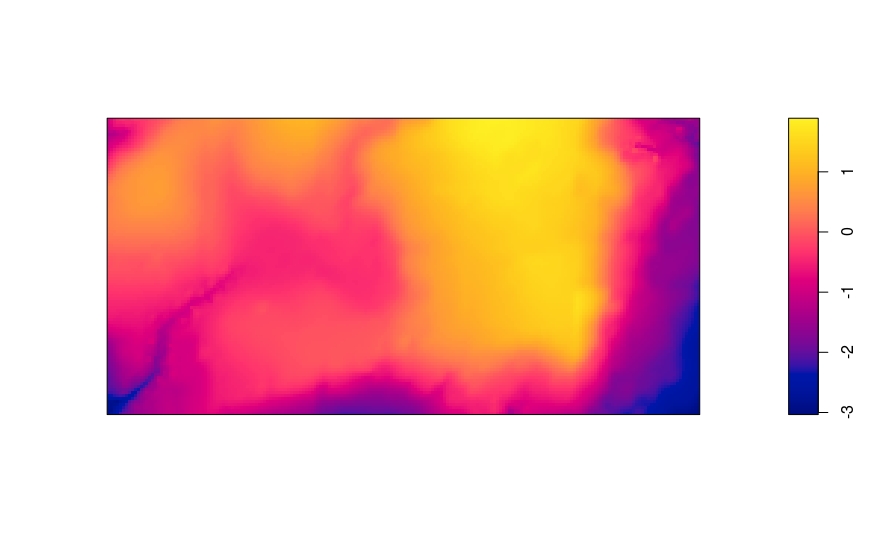} & 	\includegraphics[width=0.33\textwidth]{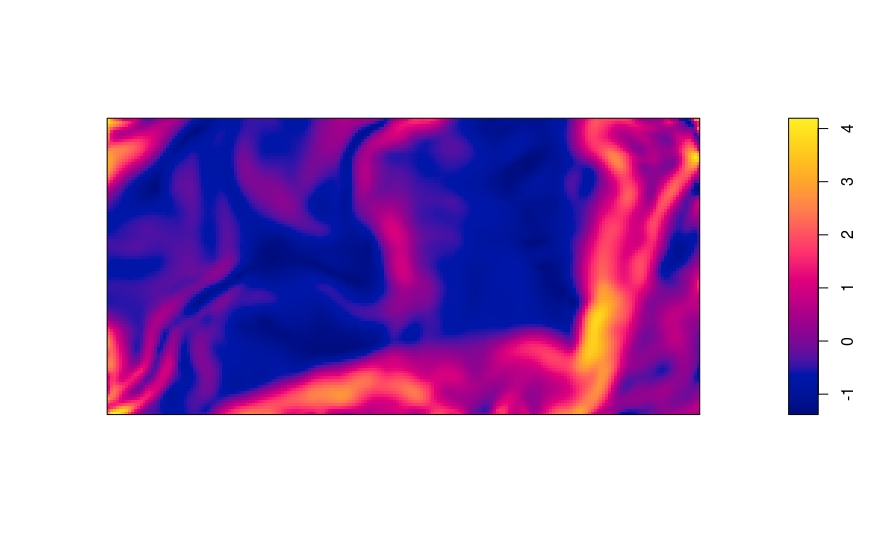} &	\includegraphics[width=0.33\textwidth]{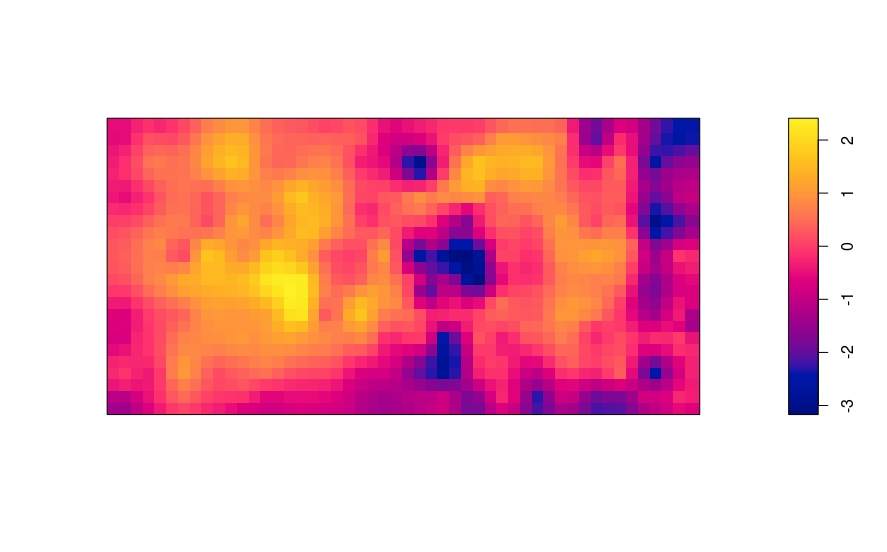}  \\
			\includegraphics[width=0.33\textwidth]{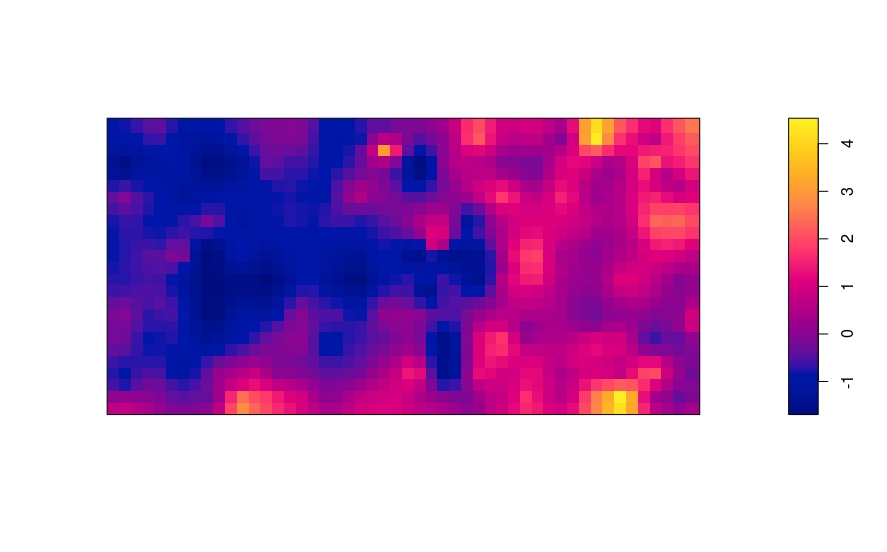} & 	\includegraphics[width=0.33\textwidth]{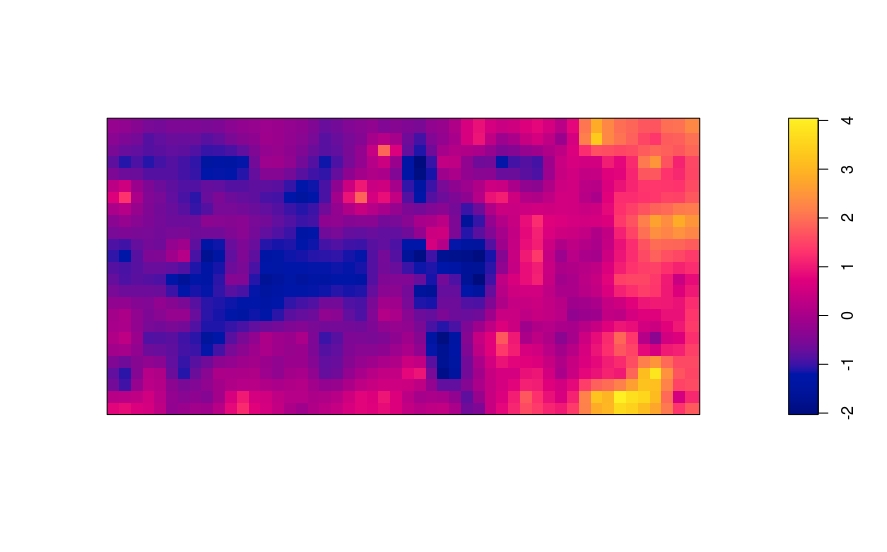} &	\includegraphics[width=0.33\textwidth]{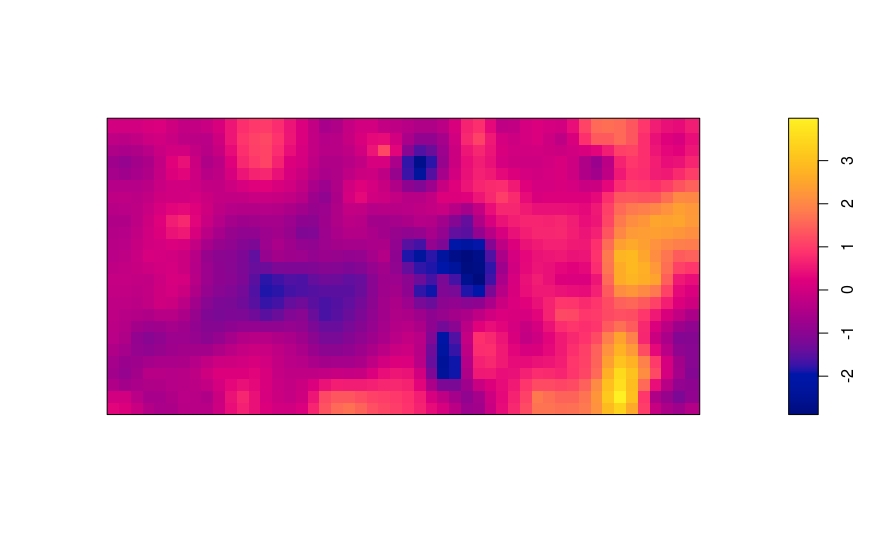}  \\
		\end{tabular}
		\caption{Maps of covariates used in the simulation study. From left to right: First row:
			elevation ($z_1$), slope ($z_2$) and aluminium ($z_3$); Second row: boron ($z_4$), calcium ($z_5$) and copper ($z_6$).}
		\label{cov}
	\end{figure}
	
	Skipping the dependence on $n$ in the notation,  we model the intensity function as
	\begin{align} \label{eq:int:sim}
	\rho(u,\bbeta)=\omega\exp\{\beta_1 z_1(u)+\cdots+\beta_6 z_6(u)\},
	\end{align}
	where $z_1,\cdots,z_6$ are the centered and scaled covariates as in Figure~\ref{cov} and $\beta_1,\cdots,\beta_6$ are the regression coefficients. We consider different settings for $\omega$ and $W$. Note that ${\omega}$ plays the role of $\theta \exp(\beta_0)$ and will be adjusted to obtain desired expected numbers of points $\mu$ in $W$. When the simulation involves an observation window $W$ different from $[0,1000]\times[0,500]$, the covariates are simply rescaled to fit $W$.
	
The model selection criteria are compared in terms of the true
positive rate (TPR),  false positive rate (FPR), expected
Kullback-Leibler divergence (MKL), and mean integrated squared
error of the intensity function (MISE). The TPR (resp.\ FPR) are the
expected fractions of informative (resp.\ non-informative)
covariates included in the selected model. 
For a point process observed on $W$, with intensity $\rho(;\bbeta^*)$
where $\bbeta^*$ stands for the true parameter estimated by $\hat
\bbeta$, MKL and MISE are estimated by  averaging the following KL and ISE across simulations:
	\begin{align*}
	\mathrm{KL}= & \int_{W} \left[\rho(u;\bbeta^*)\left\{\log\rho(u;\bbeta^*)-1\right\}-\rho(u;\hat \bbeta)\left\{\log\rho(u;\hat \bbeta)-1\right\} \right] \mathrm{d}u \\
	\mathrm{ISE}= &\int_{W} \left\{\rho(u;\bbeta^*)-\rho(u;\hat\bbeta)\right\}^2 \mathrm{d}u.
	\end{align*}

	\subsection{Poisson point process model}	\label{sec:sim:pois}
	
	We consider two different scenarios to illustrate both types of asymptotics.
	\begin{itemize}
		\item \noindent{\bf Scenario 1 (infill asymptotics).}  $W=[0,1]\times[0,0.5]$. We adjust $\omega=\theta \exp(\beta_0)$ such that $\mu$ equals either $50$ or $200$. 
		\item \noindent{\bf Scenario 2 (increasing domain
                    asymptotics).}  $W=[0,500]\times[0,250]$ or
                  $[0,1000]\times[0,500]$ and $\omega$ is chosen so that  $\mu=200$ or $\mu=800$. 
	\end{itemize}

For each scenario and the choices of $W$ and $\omega$, 500
  simulations are generated from inhomogeneous Poisson point processes
  with intensity function given by~\eqref{eq:int:sim} using the
function $\mathtt{rpoispp}$ from the $\mathtt{spatstat}$ $\mathtt{R}$
package. We set $\beta_1=0.5$ and $\beta_2=-0.25$ to represent
moderate effects of elevation and slope, and set
$\beta_3=\dots=\beta_6=0$. For each simulation, parameters are
estimated by maximizing the Berman-Turner approximation \citep[see
e.g.\ ][]{baddeley:turner:00}  of the Poisson
log-likelihood~\eqref{eq:loglikelihood} using a number $m$ of quadrature dummy
points to approximate the integral in~\eqref{eq:loglikelihood}. The
estimation is done using the $\mathtt{ppm}$ function. Then, the model is selected according to the AIC and BIC-type criteria. We consider different variants of the BIC criterion, namely 
\[
\mathrm{BIC}_l(\pi) = -2 \ell(\boldsymbol{\hat \beta}_{l})+ p_{l,\text{approx}}^* \; \log(\pi) \quad l=1,\cdots,64,
\]
where $\pi$ represents a penalty. Note that (omitting dependence on $l$) \linebreak
$\mathrm{AIC}=\mathrm{BIC}\{\exp(2)\}$ and that $\mathrm{BIC}(N)$ ($N=N(W)$)
corresponds to the criterion used by~\citet{thurman:fu:guan:15} and is
also the criterion suggested by the present paper. We also consider $\mathrm{BIC}(|W|)$ used by~\citet{choiruddin:coeurjolly:letue:18} and $\mathrm{BIC}(N+m)$ considered by~\citet{jeffrey:etal:18}.

\begin{table}[!ht]
\centering
\setlength{\tabcolsep}{5pt}
\begin{tabular}{llcccc}
\hline
&& AIC & \multicolumn{3}{c}{BIC$(\pi), \;\; \pi=$}\\
&& & {\small$N$} & {\small$N+400\mu$} & {\small$|W|$} \\
\hline
{\small$W=[0,1]\!\!\times\!\! [0,0.5]$} & TPR &70 & 59  & 49 & 100 \\
$\mu=50$ & FPR &16 & 5 &  1 & 100 \\
& MISE& 6.5 & 6.0 & 6.0 & 7.9 \\
& MKL  &3.0 & 2.9  & 2.9 & 3.6\\
\hline
{\small$W=[0,1]\!\!\times\!\! [0,0.5]$} & TPR  & 94 & 83 & 63 & 100 \\
$\mu=200$ & FPR& 17 & 2 & 0 & 100 \\
& MISE  & 2.6 & 2.3 & 3.3 & 3.2 \\
& MKL  & 2.9 & 2.8 & 4.2 & 3.5\\
\hline
{\small$W=[0,500]\!\!\times\!\! [0,250]$} & TPR &95 & 83 & 63 & 62 \\
$\mu=200$ & FPR  & 16 & 2 & 0 & 0 \\
& MISE & 1.0 & 0.9 & 1.3 & 1.3 \\
& MKL & 2.8 & 2.7 & 4.1 & 4.2\\
\hline
{\small$W=[0,1000]\!\!\times\!\! [0,500]$} & TPR & 100 & 99 & 99 & 98\\
$\mu=800$ & FPR & 18 & 1 & 0 & 0 \\
& MISE  & 10.4 & 6.5 & 6.7 & 7.0 \\
& MKL  & 2.9 & 1.8 & 1.9 & 2.0 \\
\hline
\end{tabular}	
\caption{True positive (TPR) and false positive (FPR) rates in   percent, MISE, and MKL, estimated from 500 simulations of
  inhomogeneous Poisson point processes on different observation
  domains. Model selections are based on AIC or BIC criteria of the
  form $\mathrm{BIC}_l = -2 \ell(\hat \bbeta_{l}) + p_l \log \pi$
  ($l=1,\dots,64$) for different penalty terms
  $\pi=N,N+400\mu,|W|$. For convenience, the four rows with MISE are
  multiplied by respectively .001, .01, 100, and 1000.}
\label{tab:poisson}
\end{table}

For both scenarios, we perform estimation 
with $m=4 \mu$ (the rule of thumb suggested by \texttt{spatstat}) and model selection with the criteria $\mathrm{AIC}$, $\mathrm{BIC}(N)$, $\mathrm{BIC}(N+4\mu)$ and $\mathrm{BIC}(|W|)$. Similar results are obtained with $\mathrm{BIC}(N)$ and $\mathrm{BIC}(N+4\mu)$ and we omit the results for the latter. We also perform estimation and selection with $m=400\mu$ and only report results for $\mathrm{BIC}(N+400\mu)$ since the results with the criteria $\mathrm{AIC}$, $\mathrm{BIC}(N)$ and $\mathrm{BIC}(|W|)$  are similar to those obtained when the estimation is performed with $m=4\mu$.

When $|W|$ is small, especially when $\log|W|<0$, the criterion $\mathrm{BIC}(|W|)$ obviously fails as it selects the most complex model regardless of the value of $\mu$. Thereby the FP rate
becomes 100\%. In addition, as indicated in the second and third rows
of Table~\ref{tab:poisson} where the point patterns have the same
average number of points, it is worth noticing that
$\mathrm{BIC}(|W|)$ selects pretty different models. In particular this
	criterion has an undesirable strong dependence on the choice of length unit.

The criterion $\mathrm{BIC}(N+m)$ with a  large $m$ also fails
since the TPR with this criterion and $m=400 \mu$ is very small compared to the
	other criteria, especially when the expected number of points is small or moderate.
The AIC criterion  achieves a high TPR in all situations but fails since it suffers from a high FPR, even in the scenario 2 where
$\mu=800$.  

In all cases, $\mathrm{BIC}(N)$ provides the best trade-off between TPR and
	FPR and the results improve when $\omega$ or $|W|$ is increased. The
	minimal values of MKL and MISE are further always obtained with
	$\mathrm{BIC}(N)$. In case of a Poisson process, we therefore
	recommend $\mathrm{BIC}(N)$ (simply denoted BIC in the following).
	
\subsection{Thomas point process model} \label{sec:sim:nonpois}

To generate a simulation from a Thomas point process with
intensity \eqref{eq:int:sim}, we first generate a
parent point
pattern from a stationary Poisson point process $\mathbf{C}$ with
intensity $\kappa>0$. Given $\mathbf{C}$, clusters
  $\mathbf{X}_c$, $c \in \mathbf{C}$, are generated from inhomogeneous Poisson point processes  with intensity functions
\begin{align*}
\rho_{c}(u; \bbeta)=\omega\exp\{\beta_1 z_1(u)+\cdots+\beta_6 z_6(u)\} k(u-c;\gamma)/\kappa,
\end{align*}	
where $k(u-c;\gamma)=(2 \pi \gamma^2)^{-1} \exp(-\|u-c\|^2/(2
\gamma^2))$ is the density for $\mathcal{N}(0,\gamma^2
\mathbf{I}_2)$. Finally, $\mathbf{X}=\cup_{c \in
  \mathbf{C}}\mathbf{X}_c$ is an inhomogeneous Thomas point process
with intensity \eqref{eq:int:sim}. The regression parameters are set
as follows: $\beta_1=2$, $\beta_2=-1$, $\beta_3=\dots=\beta_6=0$. We
consider $\kappa=4 \times 10^{-4}$ and two scale parameters $\gamma=5$
and $\gamma=15$.  A lower value for $\gamma$ tends to produce
more clustered patterns. We consider the observation domains
$W=[0,500]\times[0,250]$ and $W=[0,1000]\times[0,500]$ with $\omega$
adjusted to give expected numbers of points $\mu=400$ and $\mu=1600$
for the two windows. The chosen value of $\kappa$ implies on average
50 parent points on $W=[0,500]\times[0,250]$ and 200 parent points on $W=[0,1000]\times[0,500]$.
\begin{table}[!ht]
	\centering
	\begin{tabular}{llrrrr}
		\hline
&		& AIC & BIC & CIC & CBIC \\ 
		\hline
{$W=[0,500]\!\!\times\!\! [0,250]$} &		TPR &98 & 96 & 88 & 81\\
$\mu=400$	&	FPR& 81 & 65 & 24 & 17 \\
$\gamma=5$	&	 MISE & 4.5 & 4.4 & 2.7 & 2.5 \\
	&	MKL & 9.7 & 9.6 & 7.5 & 9.1 \\
	&	Mean$(\hat p_l^*)$ & - & -  & 103.6 & 76.6 \\
	&	SD$(\hat p_l^*)$ & - & - & 122.7 & 70.1 \\
		\hline
{$W=[0,1000]\!\!\times\!\! [0,500]$} &		TPR & 100 & 100 & 97 & 94 \\
$\mu=1600$	&	FPR & 81 & 65 & 20 & 11 \\
$\gamma=5$	&	MISE & 3.9 & 3.8 & 2.6 & 2.4 \\
	&	MKL & 10.3 & 10.2 & 7.9 & 8.2 \\
&	Mean$(\hat p_l^*)$ & - & - & 101.1 & 81.3 \\
	&	SD$(\hat p_l^*)$ & - & - & 104.6 & 70.2 \\
		\hline
\hline
{$W=[0,500]\!\!\times\!\! [0,250]$} &		TPR & 100 & 99 & 95 & 93 \\
$\mu=400$	&	FPR & 70 & 49 & 43 & 32 \\
$\gamma=15$	&	MISE & 2.1 & 2.0 & 1.8 & 1.8 \\
	&	MKL & 5.2 & 5.0 & 4.9 & 5.4 \\
&	Mean$(\hat p_l^*)$ & - & - & 48.9 & 40.9 \\
	&	SD$(\hat p_l^*)$ & - & - & 77.3 & 61.3 \\
		\hline
{$W=[0,1000]\!\!\times\!\! [0,500]$} &		TPR & 100 & 100 & 96 & 94 \\
$\mu=1600$	&	FPR & 78 & 57 & 36 & 24 \\
$\gamma=15$	&	MISE & 2.8 & 2.8 & 2.3 & 2.5 \\
	&	MKL & 7.7 & 7.5 & 7.3 & 9.3 \\
&	Mean$(\hat p_l^*)$ & - & - & 96.1 & 78.1 \\
	&	SD$(\hat p_l^*)$ & - & - & 172.1 & 124.6 \\
		\hline
	\end{tabular}
	\caption{True positive (TPR) and false positive (FPR) rates in percent, MISE, MKL (divided by 10), and mean and standard deviations of estimates of $p_{l,\text{approx}}^*$, based on 500 simulations from inhomogeneous Thomas point processes with $\kappa=4\times10^{-4}$ and scale parameter $\gamma=5$ or 15, observed on different observation domains. Parameters are adjusted to have on average $\mu=400$ points in the small window and 1600 points in the larger one. 
Model selection is  based on AIC, BIC, CIC, and CBIC.}
\label{tab:thomas}
\end{table}

The \texttt{R} function \texttt{rThomas} is used to simulate the point patterns and the function $\mathtt{kppm}$ to estimate the regression parameters. We use $m=16\mu$ dummy points for the different integral approximations.
Models are then selected using four criteria: AIC, BIC, CIC and
CBIC. When using AIC or BIC we implicitly assume wrongly that
the simulated patterns come from a Poisson point process and
thus we set $p_{l,\text{approx}}^*=p_l$. For the composite likelihood
type criteria CIC and CBIC, we compute $\hat p_{l,\text{approx}}^*$
using the \texttt{R} function \texttt{vcov.kppm}. The parameters
$\kappa$ and $\gamma$ are estimated using minimum contrast estimation
with the tuning parameter $r_{\max}$ \citep{waagepetersen:guan:09} set to 20 when $\gamma=5$ and to 50 when $\gamma=15$.

Results are reported in Table~\ref{tab:thomas}. The AIC and BIC
criteria ignore the second-order structure of the
simulated point patterns and we observe that overall AIC and
  BIC produce  high TPR but also very high FPR. The CIC and CBIC give
much more reasonable trade-offs between TPR and FPR and also (with one exception) give smaller MKL and MISE than AIC and BIC.

Focusing on CIC and BIC, we first notice that the means of the estimates of
$p_{l,\text{approx}}^*$ are high. This is because the
considered clustered point processes are far from  Poisson models. We
also notice that even when the number of points is quite large, the
estimation of $p_{l,\text{approx}}^*$ is inaccurate with standard
deviations of the estimates of the same order as the
means. Nevertheless, the resulting CIC and CBIC give reasonable
results. Comparing CIC and CBIC, CIC in general has a higher FPR than CBIC
but on the other hand always gives the smallest MKL. The TPR and MISE are quite
similar for CIC and BIC. Hence in the case of the clustered point process
considered here, CIC and CBIC clearly outperform AIC and BIC but there is not a clear winner between CIC
and CBIC.
	
\section{Discussion} \label{sec:concl}

In this paper we establish a theoretical foundation for various
model selection criteria for inhomogeneous point processes under various asymptotic settings. In case of
a Poisson process a main contribution is to identify in relation to
BIC, the correct
interpretation of `sample size' which based on our
theoretical derivation is the expected number of points which in
practice is estimated by the observed number of points. This
interpretation is supported by our simulation study which also
supports the common understanding that BIC may be preferable to AIC
which tends to pick too complex models.

 More generally for selecting a regression model for the intensity function of a
general point process we develop composite model selection criteria,
CIC and CBIC that clearly outperform AIC and BIC in the simulation
study for a clustered point process. One issue regarding CIC and BIC
is to estimate the bias correction for the estimate of the composite
Kullback-Leibler divergence which depends on the unknown true intensity
function and pair correlation function. Here, inspired by the approach
underlying AIC, for a given model we simply plug in the fitted
intensity function and pair correlation function for the model in
question. This is computationally convenient and we leave it as an
open problem to develop more precise estimates.

Further interesting topics for future research would be to study the
theoretical foundation of criteria for selecting models for the
conditional intensity of a Gibbs process fitted by
pseudo-likelihood. Another interesting problem is selection of the
penalization parameter when the intensity function is estimated using
regularization methods like the lasso \cite[see e.g][]{thurman:fu:guan:15,choiruddin:coeurjolly:letue:18}. This is not covered by our theoretical results which rely on asymptotic results for unbiased estimating functions or Bayesian considerations.\\[\baselineskip]
{\bf Acknowledgments}\\[\baselineskip]
The research of J.-F. Coeurjolly is supported by the Natural Sciences and Engineering Research Council.Rasmus Waagepetersen is supported by The Danish Council for Independent Research | Natural Sciences, grant DFF - 7014-00074 "Statistics for point processes in space and beyond", and by the Centre for Stochastic Geometry and
Advanced Bioimaging, funded by grant 8721 from the Villum Foundation.

	\appendix 
	
	\section{Proof of Theorem~\ref{thm:lwpv}} \label{sec:mlestar}

\begin{proof}
(i) Using a zero order Taylor expansion of $e_n(\hat \bbeta_{l,n})$ (which equals zero by definition) around $\bbeta_{l,n}^*$ expressed with integral remainder term, we have
\begin{align*}
 e_n(\bbeta_{l,n}^*) = & \left(\int_{0}^1   \!\!\int_{W_n}\!\!\! \theta_n \bz_l(u) \bz_l(u)^\top \exp \left[ 
	\left\{ \bbeta_{l,n}^* + t(\hat \bbeta_{l,n}-\bbeta_{l,n}^*) \right\}^\top \bz_l(u)\right] \dd u \dd t \right) \\
	&\times \left( \hat \bbeta_{l,n} - \bbeta_{l,n}^* \right).
\end{align*}
By rearranging the right-hand side of the latter equation and using Fubini's theorem, we have 
\begin{align*}
e_n(\bbeta_{l,n}^*) =& \theta_n \bigg[\int_{W_n} \bz_l(u) \exp \left\{ {\bbeta_{l,n}^*}^\top \bz_l(u)  \right\} \times \\
&\int_{0}^1	\left\{(\hat \bbeta_{l,n} - \bbeta_{l,n}^*)^\top \bz_l(u) \right\} \exp \left\{ t (\hat \bbeta_{l,n} - \bbeta_{l,n}^*)^\top \bz_l(u)\right\} \dd t \bigg] \dd u \\
=& \theta_n \int_{W_n} \bz_l(u) \exp \left\{ {\bbeta_{l,n}^*}^\top \bz_l(u)  \right\} 
\left[
\exp \left\{ (\hat \bbeta_{l,n} - \bbeta_{l,n}^*)^\top \bz_l(u)\right\} -1
\right] \dd u.
\end{align*}
Now, using 
C\ref{c:boundedcov}-C\ref{c:asymptotic}, we can apply a mean value theorem for multiple integrals: there exists $v=v_n(\hat \bbeta_{l,n},\bbeta_{l,n}^*) \in W_n$ such that
\begin{equation}
	\label{eq:tmp}
	e_n(\bbeta_{l,n}^*) = \tau_n \bz_l(v) \exp \left\{ {\bbeta_{l,n}^*}^\top \bz_l(v)  \right\} 
\left[
\exp \left\{ (\hat \bbeta_{l,n} - \bbeta_{l,n}^*)^\top \bz_l(v)\right\} -1.
\right]
\end{equation}
Since  $\|\bz_l(v)\|>0$ almost surely by condition C\ref{c:boundedcov}, we  deduce~\eqref{eq:betaEst} from~\eqref{eq:tmp} using a little algebra. Let
\[ A
_n = \tau_n^{-1} {e_n(\bbeta_{l,n}^*)}^\top \frac{\bz_l(v)}{\|\bz_l(v)\|^2} \exp\{ - {\bbeta_{l,n}^*}^\top \bz_l(v)\}.\] 
By conditions C\ref{c:compact}-C\ref{c:as}, it is clear that $A_n \to 0$ almost surely as $n \to \infty$. Using the continuity of $t \mapsto \log(1+t)$ and again condition C\ref{c:boundedcov}, we deduce that $\log(1+A_n)\bz_l(v)/\|\bz_l(v)\|^2$ tends to 0 almost surely.\\
 
(ii) The proof consists in applying a modified version of 
\citet[Theorem~2]{waagepetersen:guan:09} to the sequence of estimating
functions $e_n(\bbeta_l)$. The modification is given in Appendix~\ref{sec:wg09} and is needed to handle a sequence of `least false' parameter values $\boldsymbol\ta_n^*$ instead of a unique `true' value $\boldsymbol\ta^*$. Thus, we need to prove the assumptions G\ref{cond:tozero}-G\ref{cond:unormal} in Appendix~\ref{sec:wg09}. 
We leave the reader to check that conditions C\ref{c:asymptotic} and~C\ref{c:eigen}  imply conditions G\ref{cond:tozero} and G\ref{cond:posdef}  with $\mathbf V_n=\sqrt{\tau_n}\mathbf I_p$ and $\mathbf J_n( \boldsymbol\theta^*_n)= \mathbf S_n(\bbeta_{l,n}^*)$.
			
To prove assumption G\ref{cond:contgeneral}, we have to prove that as $n\to \infty$,
\[
\sup_{\sqrt{\tau_n }\|\bbeta_l-\bbeta_{l,n}^*\|\leq d} \frac{\|\bS_n(\bbeta_l)-\bS_n(\bbeta_{l,n}^*)\|_M}{\tau_n} \to 0,
\]
where $\|\mathbf M\|_M= \max_{ij} |M_{ij}|$. 
Consider a $\bbeta_l$ such that $\sqrt{\tau_n} \|\bbeta_l -\bbeta_{l,n}^*\|\le d$. Under conditions~C\ref{c:compact}-C\ref{c:asymptotic}, there exist $K_1,K_2,K<\infty$ such that
\begin{align}
\| \bS_n(\bbeta_l) -\bS_n(\bbeta_{l,n}^*)\|_M &= \| \theta_n
                                                       \int_{W_n}
                                                       \bz_l(u)
                                                       \bz_l(u)^\top
                                                       \left\{
                                                       \rho(u;\bbeta_l)-\rho(u;\bbeta_{l,n}^*)\right\}
                                                       \dd u	\|_M \nonumber\\
&\le K_1 \, \tau_n \|\bbeta_l-\bbeta_{l,n}^*\| \exp \left\{  K_2 \left(\|\bbeta_{l,n}^*\| +d/\sqrt{\tau_n} \right) \right\} \nonumber \\
&\leq K \tau_n \|\bbeta_l-\bbeta_{l,n}^*\| =\mathcal O(\sqrt{\tau_n}) = \mathcal O_\P(\sqrt{\tau_n}). \label{eq:diffSnstar}
\end{align}
To verify condition G\ref{cond:unormal}, it is sufficient to note that condition~C\ref{c:variance} implies that the variance of $\tau_n^{-1/2} e_n(\bbeta_{l,n}^*)$ is bounded. Letting $\hat
\bbeta_{l,n}$ denote the (unique for $n$ large enough) solution of
$e_n(\bbeta_l)=0$ or, equivalently,  
\[
\hat \bbeta_{l,n}  = \argmax_{\bbeta_l \in \R^{p_l}} \ell_n(\bbeta_l)
\]
we can then conclude that $\hat \bbeta_{l,n}$ is  a root-${\tau_n}$
consistent estimator of $\bbeta_{l,n}^*$, that is, $\sqrt{\tau_n}(\hat \bbeta_{l,n}-\bbeta_{l,n}^*)$ is
bounded in probability, which proves~\eqref{eq:roottaunstar}. 
		
(iii)  We use a Taylor expansion around $\bbeta_{l,n}^*$: there exists $t\in (0,1)$ and $\tilde \bbeta_{l,n} = \hat \bbeta_{l,n} + t(\bbeta_{l,n}^*- \hat \bbeta_{l,n})$ such that
\begin{align*}
e_n(\bbeta_{l,n}^*)&= e_n(\bbeta_{l,n}^*) - e_n(\hat \bbeta_{l,n}) = \bS_n(\tilde \bbeta_{l,n}) (\hat \bbeta_{l,n} - \bbeta_{l,n}^*)	\\
&= \bS_n(\bbeta_{l,n}^*) (\hat \bbeta_{l,n} - \bbeta_{l,n}^*) + \mathbf B_n \bS_n^\prime
\end{align*}
where $\mathbf B_n = \sqrt{\tau_n} (\hat \bbeta_{l,n}-\bbeta_{l,n}^*)$
and where $\bS_n^\prime = \tau_n^{-1/2} \left\{  \bS_n(\tilde
  \bbeta_{l,n}) - \bS_n(\bbeta_{l,n}^*) \right\}$. We now show
  that $\mathbf B_n \mathbf S_n^\prime= \mathcal O_\P(1)$. Let $d$ and $K$ be given
  as above and let $K_3 \ge d^2K$. Using~\eqref{eq:diffSnstar} we obtain
\begin{align*}
& \P \left( \|\mathbf B_n \mathbf S_n^\prime\|_M \ge K_3 \right ) \leq
                 \P    \left(\| \mathbf B_n \mathbf S_n^\prime\|_M \ge
                 K_3 , \|\mathbf B_n\|<  d\right) + \P \left(
                 \|\mathbf  B_n\|\ge d \right) \\
\leq  &	\P    \left( d K \|\mathbf B_n\| \ge K_3 \right) + \P \left(
        \|\mathbf B_n\|\ge d \right) \le 2 P \left (\|\mathbf B_n\| \ge d \right)
\end{align*}
Now from (ii), for any $\varepsilon>0$, by choosing $d$ large
enough, $P(\|\mathbf B_n\| \ge d) \le \varepsilon/2$ for $n$ sufficiently
large whereby $\P \left( \|\mathbf B_n \mathbf S_n^\prime\|_M \ge K_3 \right)
  \le \varepsilon$ for $n$ sufficiently large.

Finally,
since by condition C\ref{c:eigen}, $\|\bSigma_{l,n}^{-1/2}\| =  \mathcal O(\tau_n^{-1/2})$, we conclude that
\begin{align*}
\boldsymbol\Sigma_{l,n}^{-1/2} \bS_n(\bbeta_{l,n}^*) \left( \hat \bbeta_{l,n} -\bbeta_{l,n}^* \right)  &= \boldsymbol\Sigma_{l,n}^{-1/2} e_n(\bbeta_{l,n}^*) + \boldsymbol\Sigma_{l,n}^{-1/2} \mathcal O_P(1)\\
& = \boldsymbol\Sigma_{l,n}^{-1/2} e_n(\bbeta_{l,n}^*) + o_\P(1)	
\end{align*}
which yields the result using   condition~C\ref{c:clt} and Slutsky's lemma.
\end{proof}

\section{Conditions~C\ref{c:as} and~C\ref{c:clt} for the inhomogeneous Poisson cluster point process}
\label{app:cltICPP}

In this section, we show that the inhomogeneous Poisson cluster point
process presented in the end of Section~\ref{sec:asymp} satisfies
C\ref{c:as} and~C\ref{c:clt}. Recall $\lambda_n(u)=
  \theta_n\alpha \rho(u)$ where $\sup_u \rho(u)={\mathcal O}(1)$.

\begin{proof}
For $k=1,\dots,\lfloor \tau_n \rfloor$, let $\bC_{n,k}$ be independent inhomogeneous Poisson point processes with intensity $\theta_n/\lfloor \tau_n \rfloor$. By the property of any Poisson point process, $\bC_n$ has the same distribution as $\cup_k \bC_{n,k}$. Define $\bA_n = \sum_{u\in \bX_n} \bz_l(u)$. Then,
\[
	\bA_n = \sum_{k=1}^{\lfloor \tau_n \rfloor} \bZ_{n,k} \qquad \text{ where } \qquad 
	\bZ_{n,k} = \sum_{c \in \bC_{n,k}} \sum_{u \in \bX_{c}} \bz_l(u).
\]
Using twice the Slivnyak-Mecke Theorem \cite[see e.g.\ Theorem 3.1 in][]{moeller:waagepetersen:04}, 
\begin{align*}
\EE \bZ_{n,k} &= \lfloor \tau_n \rfloor^{-1}\int_{W_n} 	\bz_l(u) \lambda_n(u) \dd u \\
\Var \bZ_{n,k}& = \lfloor \tau_n  \rfloor^{-1}  
\bigg\{ \int_{W_n} 	\bz_l(u) \bz_l(u)^\top \lambda_n(u) \dd u \\
& \qquad 
+ \int_{W_n } \int_{W_n} \bz_l(u)\bz_l(v)^\top\lambda_n(u) \lambda_n (v) \left( g_n(u,v)-1\right) \dd u \dd v
\bigg\}
\end{align*}
where $g_n(u,v)= 1+ \theta_n^{-1} (k*k)(v-u)$. Hence,
\begin{align*}
	\EE \bA_n = \int_{W_n} \bz_l(u) \lambda_n(u) \dd u \qquad  \text{and} \qquad 
	\Var \bA_n = \boldsymbol \Sigma_{l,n}.
\end{align*}
Moreover, by \eqref{eq:unbiased},
\[ \int_{W_n} \bz_l(u) \lambda_n(u) \dd u = \theta_n \int_{W_n} \bz_l(u) \rho_n(u;\bbeta_{l,n}^*) \dd u \] 
so that
$e_n(\bbeta_{l,n}^*)=\bA_n- \EE \bA_n = \sum_{k=1}^{\lfloor \tau_n
  \rfloor} \mathring{\bZ}_{n,k}$ with
$\mathring{\bZ}_{n,k}=\bZ_{n,k}-\EE \bZ_{n,k}$. Since
C\ref{c:variance} is satisfied as shown in Example~\ref{ex:PC} it
follows that the elements of $\Var(\mathring\bZ_{n,k})$ are ${\mathcal
  O}(1)$ whereby $\sum_{k \ge 1} \Var(\mathring\bZ_{n,k})/k^2
<\infty$ (elementwise). Therefore, we can apply Kolmogorov's strong law of large numbers to establish that ${\lfloor \tau_n \rfloor}^{-1} e_n(\bbeta_{l,n}^*)$ tends almost surely to 0. Using the same conditions and C\ref{c:eigen}, we can also apply the Lindeberg-Feller theorem and obtain that as $n\to \infty$, $\boldsymbol \Sigma_{l,n}^{-1/2} e_n(\bbeta_{l,n}^*) \to N(0,\mathbf I_{p_l})$ in distribution.
\end{proof}


\section{Proof of Proposition~\ref{prop:varinvidoni}}

For a multi-index $\alpha \in \NN^p$ with cardinality $|\alpha|=\alpha_1+\dots+\alpha_p$ and a $|\alpha|$ times differentiable function $f:\R^p \to \R$ we define for $u\in \R^p$,
	\begin{equation*}
	\partial^\alpha f(u) = \frac{\partial^{|\alpha|}f(u)}{\partial u_1^{\alpha_1} \dots \partial u_p^{\alpha_p} }.
	\end{equation*}
	 and we also use the notation $u^\alpha= u_1^{\alpha_1} \dots u_p^{\alpha_p}$.  

\begin{proof}
We follow the sketch of the proof of \citet[Lemmas 1-2]{varin:vidoni:05}. Let $\bZ_n$ be an independent copy of $\bX_n$. Let $\ell_n(\bbeta_l,\bZ_n)$ and $e_n(\bbeta_l;\bZ_n)$ be the composite likelihood and its corresponding estimating equation evaluated at $\bZ_n$. And we remind the notation $\ell_n(\bbeta_l)=\ell_n(\bbeta_l;\bX_n)$, $C_n(\bbeta_l)=-\EE \ell_n(\bbeta_l)$, $e_n(\bbeta_l)=e_n(\bbeta_l;\bX_n)$ and $\hat \bbeta_{l,n} = \mathrm{argmax}_{\bbeta_l} \ell_n(\bbeta_l)$. We have
\[
	\EE C_n(\hat \bbeta_{l,n}) = - \EE \left[ \EE \left\{ \ell_n(\hat \bbeta_{l,n} ; \bZ_n) \right\}  \big| \bX_n \right].
\]
Using a first order Taylor expansion of $\ell_n( \cdot ; \bZ_n)$ around $\bbeta_{l,n}^*$ with integral remainder term, we have
\[
	\ell_n(\hat \bbeta_{l,n} ; \bZ_n) = \ell_n(\bbeta_{l,n}^*; \bZ_n) + e_n(\bbeta_{l,n}^*;\bZ_n)^\top \bDelta_n  + \mathbf R_n(\bbeta_{l,n}^*,\hat \bbeta_{l,n}; \bZ_n)
\]
where $\bDelta_n=\hat \bbeta_{l,n}-\bbeta_{l,n}^*$ and where the integral remainder term can be expressed as
\begin{align*}
\mathbf R_n (\bbeta_{l,n}^*,\hat \bbeta_{l,n}; \bZ_n)=& 2 \sum_{\alpha \in \mathbb N^{p_l}, |\alpha|=2} 
\bigg[ \;
\frac{ (\bbeta_{l,n}^*-\hat \bbeta_{l,n})^\alpha}{\alpha!} \\
& \qquad \times 
\int_0^1 (1-t) \partial^\alpha \ell_n\left\{\hat \bbeta_{l,n} + t(\bbeta_{l,n}^*-\hat \bbeta_{l,n}); \bZ_n \right\} \mathrm d t \bigg].
\end{align*}
It is worth noticing that for  any $\bbeta_l \in \R^{p_l}$ and any $\alpha \in \mathbb N^{p_l}$ such that $|\alpha|=2$, $\partial^\alpha \ell_n(\bbeta_l; \bZ_n )$ is a deterministic function of $\bbeta_l$. Therefore $\mathbf R_n (\bbeta_{l,n}^*,\hat \bbeta_{l,n}):=\mathbf R_n (\bbeta_{l,n}^*,\hat \bbeta_{l,n}; \bZ_n)$ does not depend on $\bZ_n$. Using this and the unbiasedness of $e_n(\bbeta_{l,n}^* ; \cdot)$ we have
\[
	\EE \left\{ \ell_n(\hat \bbeta_{l,n};\bZ_n) \big| \bX_n \right\} = \EE \left\{\ell_n(\bbeta_{l,n}^*;\bZ_n) \right\} 
	+ \mathbf R_n(\bbeta_{l,n}^* , \hat \bbeta_{l,n})
\]
Hence,
\begin{equation}\label{eq:ECn}
	\EE \left\{ C_n(\hat \bbeta_{l,n}) \right\} = - \EE \left\{ \ell_n(\bbeta_{l,n}^*)\right\} - \EE \left\{ \mathbf R_n(\bbeta_{l,n}^* , \hat \bbeta_{l,n})\right\}.
\end{equation}
Now, using a first order Taylor expansion of $-\ell_n( \cdot)$ around $\bbeta_{l,n}^*$, we have
\begin{equation}\label{eq:ln2}
	-\ell_n(\hat \bbeta_{l,n}) = -\ell_n(\bbeta_{l,n}^*) - e_n(\bbeta_{l,n}^*)^\top \bDelta_n - \mathbf R_n(\bbeta_{l,n}^*,\hat \bbeta_{l,n}).
\end{equation}
Combining~\eqref{eq:ECn} and the expectation of~\eqref{eq:ln2} we obtain
\begin{align}
\EE \left\{ C_n(\hat \bbeta_{l,n}) \right\} - \EE\left\{ -\ell_n(\hat \bbeta_{l,n})\right\} =& 
\EE \left\{ e_n(\bbeta_{l,n}^*)^\top \bDelta_n\right\}. \label{eq:Cn-ln}	
\end{align}
Since by definition of $\hat \bbeta_{l,n}$, $\bDelta_n = \bS_n^{-1}(\tilde \bbeta_{l,n}) e_n(\bbeta_{l,n}^*)$ where $\tilde \bbeta_{l,n}=\hat \bbeta_{l,n}+ t (\hat \bbeta_{l,n}-\bbeta_{l,n}^*)$ for some $t\in (0,1)$
 we continue with
 \begin{align}
\EE \left\{ C_n(\hat \bbeta_{l,n}) \right\} - \EE\left\{ -\ell_n(\hat \bbeta_{l,n})\right\} =&\EE \left\{ e_n(\bbeta_{l,n}^*)^\top \bS_n(\bbeta_{l,n}^*)^{-1} e_n(\bbeta_{l,n}^*)\right\} \nonumber\\
 &+ \EE \left\{  e_n(\bbeta_{l,n}^*)^\top \mathbf M_n e_n(\bbeta_{l,n}^*)
 \right\}	\nonumber\\
 =& \mathrm{trace} \left\{ \bS_n(\bbeta_{l,n}^*)^{-1} \bSigma_{l,n}\right\} \nonumber\\
 &+ \EE \left\{  e_n(\bbeta_{l,n}^*)^\top \mathbf M_n e_n(\bbeta_{l,n}^*) \right\}
 \end{align}
where $\mathbf M_n = \bS_n(\tilde \bbeta_{l,n})^{-1} -
\bS_n(\bbeta_{l,n}^*)^{-1}$. Since $e_n(\bbeta_{l,n}^*)/\sqrt{\tau_n}$
is bounded in probability by C\ref{c:variance} and $\tau_n\mathbf M_n$
converges in probability since $\bDelta_n$ converges to zero in
probability, we obtain $e_n(\bbeta_{l,n}^*)^\top \mathbf M_n
e_n(\bbeta_{l,n}^*)$ converges in probability. This combined with
uniform integrability \eqref{eq:assumptionUI} implies $\EE \left\{  e_n(\bbeta_{l,n}^*)^\top \mathbf M_n e_n(\bbeta_{l,n}^*) \right\}=o(1)$.
\end{proof}


	\section{Proof of Proposition~\ref{thm:laplace}} \label{sec:thmlaplace}

	\begin{proof}
		We remind the notation $\tau_n = \theta_n |W_n|$.
		\begin{align}
		p_n (\bX_n \mid \mathcal M_l) &= \int_{\R^{p_l}}  p_n(\bX_n,\bbeta_l \mid \mathcal M_l) \dd \bbeta_l \nonumber\\
		& = \int_{\R^{p_l}} p_n(\bX_n; \bbeta_l) p(\bbeta_l \mid \mathcal M_l) \dd \beta_l  \nonumber\\
		&= \int_{{\R^{p_l}}}  p(\bbeta_l \mid \mathcal M_l) \exp\left\{
		\tau_n \, \ell_n(\bbeta_l) /\tau_n  \right\} \dd \bbeta_l. \label{eq:int}
		\end{align}
	Now, we are in the situation where we can apply \citet[Theorem~2]{lapinski:19}, which gives rigorous conditions under which a multivariate Laplace approximation holds. First, condition~C\ref{c:prior} ensures that $p(\bbeta_l \mid \mathcal M_l)$ is regular enough. Second, condition~C\ref{c:eigen} ensures that $\tau_n \ell_n(\bbeta_l)$ has a nonsingular Hessian matrix for any $\bbeta_l$. Third, for $\alpha\in \NN^{p_l}$ with $|\alpha|\le 3$, we have for any $\bbeta_l$ (since $z_{0l}^{\alpha_0}={1}^{\alpha_0}=1$)
		\begin{align*}
		\partial^\alpha \{\tau_n^{-1}\log p_n(\bX_n;\bbeta_l) \} =& -\frac1{|W_n|}\int_{W_n} \prod_{j  \in I_l} z_j^{\alpha_j}(u)\rho(u;\bbeta_l) \dd u. 
		\end{align*}
		Therefore, under condition C\ref{c:boundedcov},
		$
		\partial^\alpha \{\tau_n^{-1} \log p_n(\bX_n;\bbeta_l) \} 
		$
		is uniformly bounded for $\bbeta_l$ in any compact subset of $\R^d$.

The fact that $\int p(\bbeta_l \mid \mathcal M_l) \dd
\bbeta_l=1<\infty$ is sufficient to ensure \citet[condition
(5)]{lapinski:19}. Finally, by the strong consistency of $\hat \bbeta_{l,n} -\bbeta_{l,n}^*$ to 0 and from
condition C\ref{c:compact2}, the last condition to check is formulated as follows: we need to verify  that there exists $n_0\in \mathbb N$ such that for $n\ge n_0$, $\tau_n^{-1} \ell_n(\bbeta_l)$ has a unique maximum in a closed ball $B(\bbeta_{l}^*,\varepsilon) $ for some $\varepsilon>0$ where $\bbeta_{l}^*$ is given by condition C\ref{c:compact2})  and such that
\begin{equation}
	\label{eq:Delta}
	\Delta = \inf_{n\ge n_0, \bbeta_l \in \R^{p_l} \setminus B(\bbeta_l^*,\varepsilon)} \; \tau_n^{-1} 
	\left\{  
\ell_n(\hat \bbeta_{l,n}) - \ell_n (\bbeta_l)
	\right\} >0.
\end{equation}

By condition C\ref{c:eigen}, for any $n\ge 1$, $\ell_n$
(and thus $\tau_n^{-1} \ell_n$) has indeed a unique maximum. Letting $\varepsilon>0$ there exists $n_0\in \NN$ such that almost surely $\hat \bbeta_{l,n}\in B(\bbeta_l^*,\varepsilon/2)$. Consider $\Delta$ with such $n_0$ and $\varepsilon$. Note that for large $n$ and any $\bbeta_l \in \R^{p_l}\setminus B(\bbeta_l^*,\varepsilon)$, $\|\hat \bbeta_{l,n}-\bbeta_l\|\ge \varepsilon/2$ almost surely.

Using a first order Taylor expansion around $\hat \bbeta_{l,n}$ and using the fact that $e_n(\hat \bbeta_{l,n})=0$, we have
\[
\tau_n^{-1}  \left\{  
 \ell_n (\bbeta_l)- \ell_n(\hat \bbeta_{l,n}) 	\right\}=  R_n (\hat \bbeta_{l,n}, \bbeta_{l,n})
\]
where the remainder term $ R_n (\hat \bbeta_{l,n}, \bbeta_{l,n})$ is 
\begin{align*}
& \frac{2}{\tau_n} \sum_{\alpha \in \mathbb N^{p_l}, |\alpha|=2} 
\bigg[ \;
\frac{ (\bbeta_{l,n}-\hat \bbeta_{l,n})^\alpha}{\alpha!} \\
& \qquad \times 
\int_0^1 (1-t) \partial^\alpha \ell_n\left\{\hat \bbeta_{l,n} +  t(\bbeta_{l,n}-\hat   \bbeta_{l,n});  \bX_n )\right\}\dd t \bigg]\\
=& \frac{-2}{|W_n|} \sum_{\alpha \in \mathbb N^{p_l}, |\alpha|=2} \bigg[ \;\frac{ (\bbeta_{l,n}-\hat \bbeta_{l,n})^\alpha}{\alpha!} \\
& \qquad \times \int_0^1 (1-t) \int_{W_n} \bz^{\alpha}(u)  \rho(u;\hat \bbeta_{l,n} +  t(\bbeta_{l,n}-\hat   \bbeta_{l,n})) \dd u \dd t\bigg]\\
=& \frac{-2}{|W_n|}  \int_{W_n} \sum_{\alpha \in \mathbb N^{p_l},
   |\alpha|=2}  \;\frac{ (\bbeta_{l,n}-\hat
   \bbeta_{l,n})^\alpha}{\alpha!}  \bz^{\alpha}(u) \\ & \times \exp\{\hat
   \bbeta_{l,n}\bz_l(u)\} \frac{\exp\{(\bbeta_{l,n}-\hat   \bbeta_{l,n})^\top\bz_l(u)\}- (\bbeta_{l,n}-\hat   \bbeta_{l,n})^\top\bz_l(u)-1}{\{(\bbeta_{l,n}-\hat   \bbeta_{l,n})^\top\bz_l(u)\}^2}\dd u\\  
=& \frac{-1}{|W_n|}  \int_{W_n}  \exp\{\hat  \bbeta_{l,n}^\top \bz_l(u)\} \,
f \left\{ (\bbeta_{l,n}-\hat   \bbeta_{l,n})^\top\bz_l(u)\right\}
\dd u
\end{align*}
where $f(t)=\exp(t)-1-t$. Consider the set $B_n$ given by C\ref{c:eigen}. 
Since for large $n$, $\|\hat \bbeta_{l,n}-\bbeta_{l}^*\| \ge \varepsilon/2$ and since $f$ is non negative and has a unique minimum at zero, we have by condition C\ref{c:eigen} that
\[
   \inf_{u \in B_n} \exp\{\hat  \bbeta_{l,n}^\top \bz_l(u)\} > \exp(- \| \hat  \bbeta_{l,n}\| c) \text{ and }	\inf_{u\in B_n} f\left\{ (\bbeta_{l,n}-\hat   \bbeta_{l,n})^\top\bz_l(u)\right\} \ge \delta>0   
\]
for some $\delta>0$. Therefore, 
\begin{align*}
	 R_n (\hat \bbeta_{l,n}, \bbeta_{l,n}) \leq&
\frac{-1}{|W_n|}  \int_{B_n}  \exp\{\hat  \bbeta_{l,n}^\top \bz_l(u)\} \,
f \left\{ (\bbeta_{l,n}-\hat   \bbeta_{l,n})^\top\bz_l(u)\right\}
\dd u \\
&\leq  - \delta \,\frac{|B_n|}{|W_n|}  \, \exp(- \| \hat  \bbeta_{l,n}\| c).
\end{align*}
Again, by definition of $B_n$, the strong consistency of $\hat
\bbeta_{l,n}$ and condition C\ref{c:boundedcov}, we conclude that for
large $n$ there exists $\delta^\prime>0$ such that $ R_n (\hat
\bbeta_{l,n}, \bbeta_{l,n}) \leq -\delta^\prime <0 $ whereby we deduce
that $\Delta>0$.

The previous statements allow us to conclude by \citet[Theorem~2]{lapinski:19} that
\begin{align*} & p_n(\bX_n \mid \mathcal M_l) \\ = & \frac{p ( \hat \bbeta_{l,n} \mid \mathcal M_l)}{\det \{\tau_n^{-1} \bS_n (\hat \bbeta_{l,n}) \}^{1/2}}  \exp \{ \ell_n(\hat \bbeta_{l,n}) \} \left ( \frac{2 \pi}{\tau_n} \right)^{p_l/2} \!\!\!\! + \exp \{ \ell_n(\hat \bbeta_{l,n}) \} \left ( \frac{2 \pi}{\tau_n} \right)^{p_l/2} \!\!\!\! {\mathcal O}(\tau_n^{-1/2}).
\end{align*}
This gives
\begin{multline*} \log p_n(\bX_n \mid \mathcal M_l) - \log \frac{p ( \hat \bbeta_{l,n} \mid \mathcal M_l)}{\det \{\tau_n^{-1} \bS_n (\hat \bbeta_{l,n}) \}^{1/2}}  - \ell_n(\hat \bbeta_{l,n}) - \frac{p_l}{2} \log \frac{2 \pi}{\tau_n}  =\\ \log \left [1 + \frac{ \det \{\tau_n^{-1} \bS_n (\hat \bbeta_{l,n}) \}^{1/2}}{p ( \hat \bbeta_{l,n} \mid \mathcal M_l)} {\mathcal O}(\tau_n^{-1/2}) \right ].\end{multline*}
Proposition~\ref{thm:laplace} now follows since
\[ \log \left [ 1 + \frac{ \det \{\tau_n^{-1} \bS_n (\hat \bbeta_{l,n}) \}^{1/2}}{p ( \hat \bbeta_{l,n} \mid \mathcal M_l)} {\mathcal O}(\tau_n^{-1/2}) \right ] = \log \{ 1+ 
  {\mathcal O}(\tau_n^{-1/2} )\}= {\mathcal O}(\tau_n^{-1/2} ) \]
and using condition~C\ref{c:mun}.
\end{proof}

\section{Modified version of Theorem~2 in \cite{waagepetersen:guan:09}}\label{app:generalresult}\label{sec:wg09}

 Consider a sequence of estimating functions $u_n:\R^p \rightarrow
\R^p$, $n \ge 1$ whose distribution is determined by some underlying probability measure generating the data at hand. For a matrix $\bA=[a_{ij}]$, $\|\bA\|_M= \max_{ij}|a_{ij}|$, and
we let $\mathbf J_n(\boldsymbol\ta)= - \frac{\dd}{{\dd \ta}} u_n(\boldsymbol\ta)$, assuming that $u_n$ is differentiable.

\begin{theorem}\label{thm:general}
Assume that there exists a sequence of invertible symmetric matrices
$V_n$ and a sequence of parameter values $\boldsymbol\ta_n^* \in \R^p$ such that
\begin{enumerate}
\renewcommand{\theenumi}{\arabic{enumi}}
\renewcommand{\labelenumi}{G\theenumi}
\item \label{cond:tozero} $\|{\mathbf V}_n^{-1}\| \rightarrow 0$.
\item \label{cond:posdef} There exists an $l>0$ so that ${\P}( l_n <
  l)$ tends to zero where
  $$
  l_n= \inf_{\| {\boldsymbol\phi}\|=1} 
   {\boldsymbol\phi^\top} {\mathbf V}_n^{-1} {\mathbf{J}}_n({\boldsymbol\ta}_n^*) {\mathbf V}_n^{-1}
  {\boldsymbol\phi}.
  $$
\item \label{cond:contgeneral}
For any $d>0$,
\[  \sup_{\| {\mathbf V}_n({\boldsymbol\ta}- {\boldsymbol\ta}^*_n)   \| \le d} \| {\mathbf V}_n^{-1} \{ {\mathbf J}_n({\boldsymbol\ta})-{\mathbf J}_n({\boldsymbol\ta}^*_n) \}
{\mathbf V}_n^{-1} \|_M =
 \gm_{nd} \rightarrow 0\]
in probability under $P$.

\item \label{cond:unormal}
The sequence $u_n({\boldsymbol\ta}^*_n) {\mathbf V}_n^{-1}$ is bounded in probability (i.e.\
for each $\epsilon>0$ there exists a $d$ so that $\P(\|{ {\mathbf V}_n^{-1} u_n({\boldsymbol\ta}^*_n)}
\| >d) \le \epsilon$ for $n$ sufficiently large).
 \end{enumerate}
Then for each $\epsilon>0$, there exists a $d>0$ such that
\begin{equation}\label{eq:consistent} {\P} \left\{ \exists \tilde{{\boldsymbol\ta}}_n :
  u_{n}(\tilde{{\boldsymbol\ta}}_n)=0 \text{ and }
  \| {{\mathbf V}_n(\tilde{{\boldsymbol\ta}}_n - {\boldsymbol\ta}^*_n) } \| < d  \right\} > 1- \epsilon
\end{equation}
whenever $n$ is sufficiently large.
\end{theorem}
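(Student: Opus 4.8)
The plan is to reparametrise around $\boldsymbol\ta_n^*$ at the scale $\mathbf V_n$, reduce the existence of a root to the existence of a zero of an inward-pointing vector field on a ball, and close with a Brouwer-type lemma. For $\bs\in\R^p$ put $\boldsymbol\ta=\boldsymbol\ta_n^*+\mathbf V_n^{-1}\bs$ (legitimate since $\mathbf V_n$ is invertible and symmetric) and define the rescaled estimating function $h_n(\bs)=\mathbf V_n^{-1}u_n(\boldsymbol\ta_n^*+\mathbf V_n^{-1}\bs)$. Because $\mathbf V_n^{-1}$ is invertible, $u_n(\boldsymbol\ta)=0\Leftrightarrow h_n(\bs)=0$, and $\|\mathbf V_n(\boldsymbol\ta-\boldsymbol\ta_n^*)\|=\|\bs\|$; hence it suffices to produce, with probability at least $1-\epsilon$, a zero of $h_n$ in $\{\|\bs\|<d\}$ for a suitable $d$. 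First I would Taylor expand $u_n$ with integral remainder along $t\mapsto\boldsymbol\ta_n^*+t\mathbf V_n^{-1}\bs$, using $\mathbf J_n=-\dd u_n/\dd\boldsymbol\ta$, to get
\[
h_n(\bs)=\mathbf V_n^{-1}u_n(\boldsymbol\ta_n^*)-\mathbf V_n^{-1}\mathbf J_n(\boldsymbol\ta_n^*)\mathbf V_n^{-1}\bs-\mathbf R_n(\bs),
\]
with $\mathbf R_n(\bs)=\int_0^1\mathbf V_n^{-1}\{\mathbf J_n(\boldsymbol\ta_n^*+t\mathbf V_n^{-1}\bs)-\mathbf J_n(\boldsymbol\ta_n^*)\}\mathbf V_n^{-1}\bs\,\dd t$. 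Taking the inner product with $\bs$ yields, on the sphere $\|\bs\|=d$,
\[
\bs^\top h_n(\bs)=\bs^\top\mathbf V_n^{-1}u_n(\boldsymbol\ta_n^*)-\bs^\top\mathbf V_n^{-1}\mathbf J_n(\boldsymbol\ta_n^*)\mathbf V_n^{-1}\bs-\bs^\top\mathbf R_n(\bs).
\]

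Next I would bound the three terms on a single high-probability event. By G\ref{cond:unormal} there is an $M$ with $\|\mathbf V_n^{-1}u_n(\boldsymbol\ta_n^*)\|\le M$ with probability $\ge1-\epsilon/3$ for large $n$, so the first term is at most $dM$ in absolute value. By G\ref{cond:posdef}, on an event of probability $\ge1-\epsilon/3$ the quadratic form is at least $l\|\bs\|^2=ld^2$, so the second term is at most $-ld^2$. For the remainder, every point of the segment satisfies $\|\mathbf V_n(\boldsymbol\ta-\boldsymbol\ta_n^*)\|=\|t\bs\|\le d$, so G\ref{cond:contgeneral} bounds $\|\mathbf V_n^{-1}\{\mathbf J_n(\cdot)-\mathbf J_n(\boldsymbol\ta_n^*)\}\mathbf V_n^{-1}\|_M$ by $\gm_{nd}$ uniformly along the segment; converting the max-norm to the Euclidean operator norm at the cost of a factor $p$ gives $|\bs^\top\mathbf R_n(\bs)|\le p\,\gm_{nd}\,d^2$, which tends to $0$ in probability and so lies below $\tfrac14 ld^2$ on an event of probability $\ge1-\epsilon/3$ for large $n$. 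On the intersection of these events (probability $\ge1-\epsilon$) we obtain, uniformly over $\|\bs\|=d$,
\[
\bs^\top h_n(\bs)\le dM-ld^2+\tfrac14 ld^2,
\]
and choosing $d>2M/l$ makes the right-hand side strictly negative.

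Finally I would invoke the standard topological lemma (a consequence of Brouwer's fixed point theorem): a continuous map $g$ on the closed ball $\{\|\bs\|\le d\}$ satisfying $\bs^\top g(\bs)<0$ on the bounding sphere has a zero in the open ball. Since $u_n$, and therefore $h_n$, is continuous by the differentiability assumption, applying this to $h_n$ on the event just constructed produces $\tilde\bs_n$ with $\|\tilde\bs_n\|<d$ and $h_n(\tilde\bs_n)=0$, equivalently a root $\tilde{\boldsymbol\ta}_n=\boldsymbol\ta_n^*+\mathbf V_n^{-1}\tilde\bs_n$ of $u_n$ with $\|\mathbf V_n(\tilde{\boldsymbol\ta}_n-\boldsymbol\ta_n^*)\|<d$, which is exactly~\eqref{eq:consistent}. (Condition G\ref{cond:tozero} plays no role in this existence step; it is the background scaling ensuring that the located root is genuinely consistent in the original parametrisation.)

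The main obstacle I anticipate is the probabilistic bookkeeping. Conditions G\ref{cond:posdef}, G\ref{cond:contgeneral} and G\ref{cond:unormal} hold only with high probability or in probability, so the inequality $\bs^\top h_n(\bs)<0$ must be arranged to hold \emph{simultaneously} for all $\bs$ on the sphere on one event of probability exceeding $1-\epsilon$; the uniformity in $\bs$ is precisely what the supremum in G\ref{cond:contgeneral} and the infimum in G\ref{cond:posdef} supply, but calibrating the choice of $d$ against $M$ and $l$, and ensuring $d$ may be taken large enough independently of $n$, requires care. A secondary technical point is the passage from the $\|\cdot\|_M$ bound in G\ref{cond:contgeneral} to the operator-norm bound entering the quadratic form, and checking that the integral remainder $\mathbf R_n$ inherits the uniform $\gm_{nd}$ control along the whole segment.
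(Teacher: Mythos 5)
Your proposal is correct and follows essentially the same route as the paper's own proof: the same rescaling $\bs=\mathbf V_n(\boldsymbol\ta-\boldsymbol\ta_n^*)$, the same Taylor expansion with integral remainder, the same term-by-term bounds from G\ref{cond:posdef}--G\ref{cond:unormal} (including the factor $p$ from the $\|\cdot\|_M$ conversion), and the same inward-pointing-field lemma, which the paper invokes as Lemma~2 of Aitchison and Silvey (1958) rather than via Brouwer directly. The only differences are bookkeeping conventions (intersecting three $\epsilon/3$ events versus the paper's union bound on complements), so nothing further is needed.
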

\begin{proof}
The event
\[ \{ \exists  \tilde{{\boldsymbol\ta}}_n : u_n(\tilde{{\boldsymbol\ta}}_n)=0 \text{ and }
  \| {{\mathbf V}_n(\tilde{{\boldsymbol\ta}}_n - {\boldsymbol\ta}^*_n) }\| < d \}
\]
occurs if 
${ {\boldsymbol\phi}^\top {\mathbf V}_n^{-1}  u_n({\boldsymbol\ta}^*_n+  {\mathbf V}_n^{-1} {\boldsymbol\phi} )}
 <0$ for all
${\boldsymbol\phi}$ with $\| {\boldsymbol\phi} \|=d$ since this implies ${u_n({\boldsymbol\ta}^*_n+  {\mathbf V}_n^{-1} {\boldsymbol\phi} )}$ for some $\| {\boldsymbol\phi}\| < d$ \cite[Lemma~2
in][]{aitchison:silvey:58}. Hence we need to show that there is a
$d$ such that
\begin{equation*}\label{eq:limsupzero} 
\P \left\{ \sup_{\| {\boldsymbol\phi} \| =d}
  { {\boldsymbol\phi}^\top {\mathbf V}_n^{-1}  u_n({\boldsymbol\ta}^*_n+  {\mathbf V}_n^{-1} {\boldsymbol\phi} )}  \ge 0
  \right\}
   \le \epsilon
\end{equation*}
for sufficiently large $n$. To this end we write
\[ { {\boldsymbol\phi}^\top {\mathbf V}_n^{-1}  u_n({\boldsymbol\ta}^*_n+  {\mathbf V}_n^{-1} {\boldsymbol\phi} )}
 = { {\boldsymbol\phi}^\top {\mathbf V}_n^{-1}  u_n({\boldsymbol\ta}^*_n )}
 - {{\boldsymbol\phi}^\top} \int_0^1 {\mathbf V}_n^{-1} {\mathbf J}_n({\boldsymbol\ta}_n(t)) {\mathbf V}_n^{-1} \dd t {{\boldsymbol\phi}}
\] where ${\boldsymbol\ta}(t)={\boldsymbol\ta}^*_n+t { {\mathbf V}_n^{-1}{\boldsymbol\phi}}$. Then
\begin{align*}
& {\P} \left\{ \sup_{\| {\boldsymbol\phi} \| =d} 
{ {\boldsymbol\phi}^\top {\mathbf V}_n^{-1}  u_n({\boldsymbol\ta}^*_n+  {\mathbf V}_n^{-1} {\boldsymbol\phi} )}
\ge 0 \right\} \leq  \\ 
& {\P} \left\{ \sup_{\| {\boldsymbol\phi} \| =d} 
{ {\boldsymbol\phi}^\top {\mathbf V}_n^{-1}  u_n({\boldsymbol\ta}^*_n )}
  \ge \inf_{\| {\boldsymbol\phi} \|
  =d} {{\boldsymbol\phi}^\top} \int_0^1 {\mathbf V}_n^{-1}
{\mathbf J}_n({\boldsymbol\ta}_n(t)){\mathbf V}_n^{-1} \dd t \, {{\boldsymbol\phi}} \right\}  \le \\
& {\P} \left[ \| {{\mathbf V}^{-1}_n u_n({\boldsymbol\ta}^*_n}) \| \ge d  \inf_{\|
  {\boldsymbol\phi} \| =1}  \left\{ {{\boldsymbol\phi}^\top} {\mathbf V}_n^{-1}{\mathbf J}_n({\boldsymbol\ta}^*_n){\mathbf V}_n^{-1} {{\boldsymbol\phi}} \right\} - d
 p \gm_{nd}] \right] \le \\ & {\P}\left( \| {{\mathbf V}_n^{-1}u_n({\boldsymbol\ta}^*_n)}\|  \ge d l_n /2 \right) + {\P}\left(
p \gm_{nd} >  l_n /2\right).
\end{align*}
The first term can be made arbitrarily small by picking a
sufficiently large $d$ and letting $n$ tend to infinity. The second
term converges to zero as $n$ tends to infinity.
\end{proof}

\bibliographystyle{plainnat}
\bibliography{bic}
		
\end{document}